\theoremstyle{plain}
\newtheorem{theorem}{Theorem}[section]
\newtheorem{proposition}[theorem]{Proposition}
\newtheorem{lemma}[theorem]{Lemma}
\newtheorem{corollary}[theorem]{Corollary}
\newtheorem*{conjecture}{Conjecture}
\newtheorem*{hypothesis}{Hypothesis}
\theoremstyle{definition}
\newtheorem{definition}{Definition}[section]
\newtheorem{instance}[definition]{Example}
\newtheorem{remark}[definition]{Remark}
\setlist[itemize]{leftmargin=2em}
\numberwithin{equation}{section}
\title[Mirror duality between
CY fractional complete intersections]{Mirror duality between Calabi--Yau fractional complete intersections}
\author[Tsung-Ju~Lee]{Tsung-Ju~Lee}
\address{Tsung-Ju~Lee: Center of Mathematical Sciences and Applications, 20 Garden St., Cambridge, MA 02138, U.S.A.}
\email{tjlee@cmsa.fas.harvard.edu}
\begin{document}
\begin{abstract}
This is an expanded version of the author's talk 
at the third annual meeting of International Consortium of Chinese Mathematicians
held at USTC in December 2020\footnotemark.
In this expository article, we give a survey on 
joint works with Hosono, Lian, and Yau
\cites{2020-Hosono-Lee-Lian-Yau-mirror-symmetry-for-double-
cover-calabi-yau-varieties,2022-Lee-Lian-Yau-on-calabi-yau-fractional-complete-intersections}.
We also carry out explicit examples to illustrate
the results in enumerative geometry which will appear in our forthcoming papers. 

\end{abstract}
\maketitle
\tableofcontents
\stepcounter{footnote}
\footnotetext{The ICCM 
\href{https://math.ustc.edu.cn/2020/1225/c18653a466436/pagem.htm}{website}.}

\section{Introduction}
Mirror symmetry for singular Calabi--Yau varieties 
was discovered by Hosono, Lian, Takagi, and Yau
in their recent work on the study of the family of \(K3\) surfaces arising
from double covers over \(\mathbf{P}^{2}\)
branched along six lines in general position
\cites{2020-Hosono-Lian-Takagi-Yau-k3-surfaces-from-configurations-of-six-lines-in-p2-and-mirror-symmetry-i,2019-Hosono-Lian-Yau-k3-surfaces-from-configurations-of-six-lines-in-p2-and-mirror-symmetry-ii-lambda-k3-functions},
which were investigated by Matsumoto, Sasaki, and Yoshida 
as a higher dimensional analogue of the Legendre family
\cites{1988-Matsumoto-Sasaki-Yoshida-the-period-map-of-a-4-parameter-family-of-k3-surfaces-and-the-aomoto-gelfand-hypergeometric-function-of-type-3-6,1992-Matsumoto-Sasaki-Yoshida-the-monodromy-of-the-period-map-of-a-4-parameter-family-of-k3-surfaces-and-the-hypergeometric-function-of-type-3-6}.
Such a \(K3\) surface is singular and admits 
\(15\) ordinary double points (ODPs);
blowing up at these points gives a crepant resolution.
Denote by \([x\mathpunct{:}y\mathpunct{:}z]\) 
the homogeneous coordinates on \(\mathbf{P}^{2}\).
The branch locus can be parameterized by linear functions
\(a_{1j}x+a_{2j}y+a_{3j}z\) with \(j=1,\ldots,6\).
Regarding \((a_{ij})\) as a matrix, 
we see that the configurations
of six lines are parameterized by a GIT quotient
\begin{equation*}
P(3,6):=\mathrm{GL}_{3}(\mathbb{C}) \backslash \mathrm{M}(3,6) \slash
(\mathbb{C}^{\ast})^{6}.
\end{equation*}
Here \(\mathrm{M}(3,6)\) is an open subset 
in \(\mathrm{Mat}_{3\times 6}(\mathbb{C})\)
consisting of matrices whose any \(3\times 3\) minors are invertible.
The group \(\mathrm{GL}_{3}(\mathbb{C})\)
acts on \(\mathrm{M}(3,6)\) via the usual multiplication 
on the left and
\((\mathbb{C}^{\ast})^{6}\) acts on \(\mathrm{M}(3,6)\)
via scaling the columns of elements in \(\mathrm{M}(3,6)\).

The parameter space \(P(3,6)\) admits two compactifications --
\textbf{(a)} a GIT compactification 
(a.k.a.~the Baily--Borel--Satake compactification)
\cites{1988-Dolgachev-Ortland-point-sets-in-projective-spaces-and-theta-functions,1993-Matsumoto-theta-functions-on-bounded-symmetric-domain-of-type-i-22-and-the-period-map-of-a-4-parameter-family-of-k3-surfaces}
and \textbf{(b)} a toridal compactification constructed by Reuvers
\cite{2006-Reuvers-moduli-spaces-of-configurations}.
However, as Hosono, Lian, Takagi, and Yau pointed out, it is not clear whether or not
these compactifications admit a priori
the so-called \emph{large complex structure limit points} (LCSL points). 
In order to study mirror symmetry, they constructed a 
new compactification of \(P(3,6)\) instead and found 
LCSL points on it. 
We briefly explain their idea.
The \(\mathrm{GL}_{3}(\mathbb{C})\)-action 
on \(\mathbf{P}^{2}\) allows us to
rearrange three out of the six lines to the coordinate axes so that
the \(K3\) family is in fact parameterized by three lines 
in \(\mathbf{P}^{2}\). This procedure is 
called the \emph{partial gauge fixing}
in \cite{2020-Hosono-Lian-Takagi-Yau-k3-surfaces-from-configurations-of-six-lines-in-p2-and-mirror-symmetry-i}. 
The \(\mathrm{GL}_{3}(\mathbb{C})\times(\mathbb{C}^{\ast})^{6}\) action
is reduced to a \((\mathbb{C}^{\ast})^{5}\) action.
It then follows that the period integrals of
the \(K3\) family satisfy certain GKZ \(A\)-hypergeometric system
with an integral matrix \(A\in\mathrm{Mat}_{5\times 9}(\mathbb{Z})\) and
a \emph{fractional} exponent \(\beta\in\mathbb{Q}^{5}\).
The matrix \(A\) can be recognized as the integral matrix
associated to certain nef-partition on the base \(\mathbf{P}^{2}\)
and the torus \((\mathbb{C}^{\ast})^{5}\) can be identified with
\(L\otimes\mathbb{C}^{\ast}\), where \(L\) is the lattice relation of \(A\).
Consequently, \(P(3,6)\) admits a toroidal compactification
via the secondary fan.
It turns out that the standard techniques for 
Calabi--Yau hypersurfaces and complete intersections
in toric varieties are still applicable and 
results in \cites{1995-Hosono-Klemm-Theisen-Yau-mirror-symmetry-mirror-map-and-applications-to-calabi-yau-hypersurfaces,1996-Hosono-Lian-Yau-gkz-generalized-hypergeometric-systems-in-mirror-symmetry-of-calabi-yau-hypersurfaces,1997-Hosono-Lian-Yau-maximal-degeneracy-points-of-gkz-systems} 
can be adapted into the present situation.
Because of this striking similarity with the classical complete intersections, 
we shall call such a double cover a \emph{fractional complete intersection}.
Based on numerical evidences, 
it is conjectured that the mirror of the \(K3\) family is 
given by certain family of 
double covers over a del Pezzo surface of degree \(6\),
which is a blow-up of \(\mathbf{P}^{2}\)
at three torus invariant points 
(\cite{2019-Hosono-Lian-Yau-k3-surfaces-from-configurations-of-six-lines-in-p2-and-mirror-symmetry-ii-lambda-k3-functions}*{Conjecture 6.3}).
Note that such a del Pezzo surface can be obtained from
Batyrev--Borisov's duality construction for the nef-partition
on \(\mathbf{P}^{2}\) associated with the integral matrix \(A\)
and the conjectured mirror is constructed by taking an
appropriate double cover over it.

The purpose of this paper is to 
introduce the results 
obtained by Hosono, Lian, Yau and the author in
\cites{2020-Hosono-Lee-Lian-Yau-mirror-symmetry-for-double-cover-calabi-yau-varieties,2022-Lee-Lian-Yau-on-calabi-yau-fractional-complete-intersections}.
To summarize,
we generalized the construction 
to higher dimensional bases
to produce a pair of \emph{singular}
Calabi--Yau varieties
which is conjectured to be a mirror pair.
To support the conjecture, the 
first step is the topological test: one computes 
their Euler characteristics as well as their Hodge numbers;
these have been done in
\cite{2020-Hosono-Lee-Lian-Yau-mirror-symmetry-for-double-cover-calabi-yau-varieties}.
The second step is the quantum test: in the \(3\)-fold case, one could
carry out the \(A\) and \(B\) model correlation functions
and show that they are related under the \emph{mirror map}.
Here the \(B\) model is taken to be the 
variation of Hodge structures for
the equisingular family 
whereas the \(A\) model is taken to be the 
untwisted part of the genus zero orbifold Gromov--Witten
theory since our Calabi--Yau double covers are orbifolds.
In the present case, the period integrals
are governed by a GKZ \(A\)-hypergeometric system
with a fractional exponent. 
Mimicking the classical case, we found a close
relationship between the principal parts
of the operators in the GKZ \(A\)-hypergeometric system
and the cohomology ring of the base of the 
conjectured mirror Calabi--Yau variety;
this leads to a cohomology-valued series
first introduced in 1994 by Hosono, Lian, and Yau
(a.k.a.~Givental's \(I\)-function, up to 
an overall \(\Gamma\)-factor) 
which plays a crucial role in mirror symmetry
\cite{2022-Lee-Lian-Yau-on-calabi-yau-fractional-complete-intersections}.

In this expository paper, we will 
explain our idea and illustrate our results 
by carrying out explicit examples.

\subsection*{Acknowledgment}
The author would like to thank the committee of
International Consortium of Chinese Mathematicians
for giving him an opportunity to speak at the annual meeting.
He would like to thank Professors Shinobu Hosono, 
Bong H.~Lian, and Shing-Tung Yau for valuable communications
and suggestions. He also
thanks Center of Mathematical Sciences and Applications
at Harvard for hospitality while working on this project. 

\section{A singular double cover construction}

\subsection{Batyrev--Borisov's duality construction}
\label{subsection:B-B-duality-construction}
We review the construction of classical 
mirror pairs of Calabi--Yau complete
intersections in toric varieties given by
Batyrev and Borisov
\cite{1996-Batyrev-Borisov-on-calabi-yau-complete-intersections-in-toric-varieties}.
The dual pairs will be served as 
bases of singular Calabi--Yau double covers.
Let us fix the following notation and terminologies.
\begin{itemize}
  \item Let \(N=\mathbb{Z}^n\) be a lattice of rank \(n\) 
  and \(M:=\mathrm{Hom}_{\mathbb{Z}}(N,\mathbb{Z})\) be the dual lattice. 
  We denote by \(N_\mathbb{R}\) and \(M_\mathbb{R}\) the tensor products
  \(N\otimes_{\mathbb{Z}}\mathbb{R}\) and \(M\otimes_{\mathbb{Z}}\mathbb{R}\).
  \item For a complete fan \(\Sigma\) in \(N_{\mathbb{R}}\), 
  we denote by \(\Sigma(k)\) 
  the set of all \(k\)-dimensional cones in \(\Sigma\). 
  For \(\rho\in \Sigma(1)\),
  the same notation \(\rho\) is also used to denote 
  the primitive generator of the corresponding 1-cone. 
  \item The toric variety defined by \(\Sigma\) is 
  denoted by \(X_{\Sigma,N}\), \(X_{\Sigma}\) 
  or simply by \(X\) if the context is clear. 
  Let \(T=(\mathbb{C}^{\ast})^n\) be its maximal torus.
  Denote by \(D_{\rho}\) the Weil divisor on \(X_{\Sigma}\)
  determined by \(\rho\in\Sigma(1)\).
  \item Let \(D=\sum_{\rho} a_{\rho} D_{\rho}\) be a torus invariant divisor. 
  The \emph{polytope of \(D\)} is defined to be the set
  \begin{equation*}
    \Delta_D:=\{m\in M_{\mathbb{R}}~|~\langle m,\rho\rangle\ge -a_{\rho},
    ~\forall \rho\in\Sigma(1)\}.
  \end{equation*}
  The integral points in \(\Delta_{D}\) give a canonical
  basis of \(\mathrm{H}^{0}(X,D)\).
  \item A polytope in \(M_{\mathbb{R}}\) is called lattice polytope
  if its vertices belong to \(M\). For a lattice polytope \(\Delta\)
  in \(M_{\mathbb{R}}\), we denote by \(\Sigma_{\Delta}\) the normal fan of 
  \(\Delta\). The toric variety determined by 
  \(\Delta\) is denoted by \(\mathbf{P}_{\Delta}\). 
  We have \(\mathbf{P}_{\Delta}=X_{\Sigma_{\Delta}}\).
  \item A \emph{reflexive polytope} \(\Delta\subset M_{\mathbb{R}}\) 
  is a lattice polytope 
  containing the origin \(0\in M_{\mathbb{R}}\) in 
  its interior and such that the polar dual 
  \(\Delta^{\vee}\) is again a lattice polytope. If
  \(\Delta\) is a reflexive polytope, then \(\Delta^{\vee}\) is also a lattice
  polytope and satisfies \((\Delta^{\vee})^{\vee}=\Delta\). 
  The normal fan of \(\Delta\)
  is the face fan of \(\Delta^{\vee}\) and vice versa.
  \item For a reflexive polytope \(\Delta\), a \emph{nef-partition} on
  \(\mathbf{P}_{\Delta}\) is a decomposition
  \begin{equation*}
  \Sigma_{\Delta}(1) = I_{1} \dot\cup\cdots\dot\cup I_{r}
  \end{equation*}
  such that \(E_{s}:=\sum_{\rho\in I_{s}} D_{\rho}\)
  is numerical effective for each \(s\).
  This gives rise to a Minkowski sum decomposition
  \(\Delta=\Delta_{1}+\cdots+\Delta_{r}\), where \(\Delta_{i}=\Delta_{E_{i}}\)
  is the polytope of \(E_{i}\).
  \end{itemize}
We recall Batyrev--Borisov's duality construction.
Let \(\Delta\) be a reflexive polytope and 
\(\Sigma_{\Delta}(1) = I_{1} \dot\cup\cdots\dot\cup I_{r}\)
be a nef-partition.
We define \(\nabla_{k}:=\mathrm{Conv}(
\{\mathbf{0}\}\cup I_{k})\) and 
\(\nabla:=\nabla_1+\ldots+\nabla_{r}\).
It turns out that \(\nabla\) is a reflexive polytope in \(N_{\mathbb{R}}\)
whose polar polytope is 
\(\nabla^{\vee}=\mathrm{Conv}(\Delta_{1},\ldots,\Delta_{r})\)
and \(\nabla_1+\cdots+\nabla_{r}\) corresponds to a 
nef-partition on \(\mathbf{P}_{\nabla}\),
called the \emph{dual nef-partition}.
We denote by \(F_{1},\ldots,F_{r}\) 
the corresponding nef toric divisors on \(\mathbf{P}_{\nabla}\).
Then the polytope of \(F_{j}\) is \(\nabla_{j}\).

\subsection{A construction of singular double covers}
\label{subsection:singular-double-covers}
In this subsection,
we recall the construction of Calabi--Yau double covers
introduced in \cite{2020-Hosono-Lee-Lian-Yau-mirror-symmetry-for-double-cover-calabi-yau-varieties}.
Let \(\Delta=\Delta_{1}+\cdots+\Delta_{r}\) be a nef-partition
and \(\nabla = \nabla_{1}+\cdots+\nabla_{r}\)
be the dual nef-partition.
Let \(X\to \mathbf{P}_{\Delta}\) and \(X^{\vee}\to \mathbf{P}_{\nabla}\)
be maximal projective crepant partial desingularizations
(MPCP desingularizations for short hereafter)
for \(\mathbf{P}_{\Delta}\) and \(\mathbf{P}_{\nabla}\)
\cite{1994-Batyrev-dual-polyhedra-and-mirror-symmetry-for-calabi-yau-hypersurfaces-in-toric-varieties}.
Note that MPCP desingularizations are smooth
and not unique in general. 
According to the construction, 
the polytopes \(\Delta_{i}\) and \(\nabla_{j}\)
correspond to \(E_{i}\) on \(\mathbf{P}_{\Delta}\) and 
\(F_{j}\) on \(\mathbf{P}_{\nabla}\), respectively.
The nef-partitions on 
\(\mathbf{P}_{\Delta}\) and \(\mathbf{P}_{\nabla}\)
give rise to 
nef-partitions on \(X\) and \(X^{\vee}\) via pullback.
To save the notation, the corresponding toric divisors
on \(X\) and \(X^{\vee}\) and their polytopes will be still 
denoted by \(E_{i}\), \(F_{j}\) and
\(\Delta_{i}\), \(\nabla_{j}\).

\begin{hypothesis}
Throughout this paper, unless otherwise stated, 
we shall assume that 
\begin{center}
{\it \(X\) and \(X^{\vee}\) are both smooth}.
\end{center}
Equivalently, we assume that both \(\Delta\) and \(\nabla\) admit a uni-modular triangulation.
\end{hypothesis}

Using these data, we can construct a pair of 
families of Calabi--Yau varieties.
\begin{definition}
Let \(\Delta=\Delta_{1}+\cdots+\Delta_{r}\) be a decomposition
representing a nef-partition \(E_{1}+\cdots+E_{r}\) on \(X\).
A \emph{gauge fixed double cover 
branched along the nef-partition
\(\Delta_{1}+\cdots+\Delta_{r}\)} over \(X\) is
the double cover \(Y\to X\) constructed from the 
section 
\begin{equation*}
\textstyle
s=\prod_{i=1}^{r}\prod_{j=1}^{2}s_{i,j}
\end{equation*} 
where
\(s_{i,1}\) is the global section of \(E_{i}\) corresponding 
to the lattice point \(\mathbf{0}\in \Delta_{i}\) and 
\(s_{i,2}\) is a smooth global section of \(E_{i}\)
such that \(\mathrm{div}(s)\)
is a simple normal crossing (SNC) divisor.
\end{definition}

In the present situation, one can
prove that \(Y\) is Cohen--Macaulay. Moreover, 
since the branch locus is a SNC divisor,
\(Y\) is a normal variety
with at worst quotient singularities. Also, the fact that the branch 
divisor is linearly equivalent to \(-2K_{X}\)
implies that \(Y\) is Calabi--Yau; namely
\(\omega_{Y}\cong\mathscr{O}_{Y}\).

Deforming the sections \(s_{i,2}\) yields 
a singular Calabi--Yau family \(\mathcal{Y}\to V\).
Here 
\begin{equation*}
V\subset \mathrm{H}^{0}(X,E_{1})\times\cdots\times\mathrm{H}^{0}(X,E_{r})
\end{equation*}
is an open subset where
the branch divisor \(\mathrm{div}(s)\)
is SNC. Likewise, we can apply the construction
to the dual nef-partition \(\nabla=\nabla_{1}+\cdots+\nabla_{r}\) and obtain
another singular Calabi--Yau family
\(\mathcal{Y}^{\vee}\to U\),
where 
\begin{equation*}
U\subset \mathrm{H}^{0}(X^{\vee},F_{1})
\times\cdots\times\mathrm{H}^{0}(X^{\vee},F_{r})
\end{equation*}
is an open subset such that
the branch divisor is SNC.

\begin{instance}
\label{ex:p2}
Let \(\Delta=\mathrm{Conv}\{(2,-1),(-1,2),(-1,-1)\}\). 
We then have \(X=\mathbf{P}_{\Delta} = \mathbf{P}^{2}\)
since \(\mathbf{P}_{\Delta}\)
is already smooth.
Consider the trivial nef-partition \(\Delta=\Delta_{1}\).
The dual nef-partition is given by 
\begin{equation*}
\nabla = \nabla_{1} = 
\Delta^{\vee} = \mathrm{Conv}\{(1,0),(0,1),(-1,-1)\}.
\end{equation*}
The toric variety \(\mathbf{P}_{\nabla}\)
is the mirror \(\mathbf{P}^{2}\)
whose MPCP desingularization \(X^{\vee}\to\mathbf{P}_{\nabla}\) is smooth.
Applying the construction to the present case,
we obtain two singular Calabi--Yau families
\(\mathcal{Y}\to V\) and \(\mathcal{Y}^{\vee}\to U\).
The fiber of \(\mathcal{Y}\to V\)
is a double cover over \(X\) branched along three lines and 
one cubic whereas the fiber of \(\mathcal{Y}^{\vee}\to U\)
is a double cover over \(X^{\vee}\) branched along 
a wheel consisting of nine \(\mathbf{P}^{1}\)s and one 
smooth elliptic curve.
\end{instance}

\begin{instance}
\label{ex:p3}
Let \(\Delta = \mathrm{Conv}\{(-3,1,1),(1,-3,1),(1,1,-3),(-1,-1,-1)\}\).
In the present case, we have \(X = \mathbf{P}_{\Delta} = \mathbf{P}^{3}\).
Consider the trivial nef-partition \(\Delta = \Delta_{1}\). The 
dual nef-partition is given by
\begin{equation*}
\nabla = \nabla_{1} = \Delta^{\vee}
=\mathrm{Conv}\{(1,0,0),(0,1,0),(0,0,1),(-1,-1,-1)\}.
\end{equation*}
Note that in this case, \(\mathbf{P}_{\nabla}\) admits
a smooth MPCP desingularization \(X^{\vee}\to\mathbf{P}_{\nabla}\). (See also
\cite{2021-Lian-Zhu-on-the-hyperplane-conjecture-for-periods-of-
calabi-yau-hypersurfaces-in-pn}*{Appendix} for details.)
\end{instance}

\begin{conjecture}
Let \(Y\) and \(Y^{\vee}\) be the fiber of 
\(\mathcal{Y}\to V\) and \(\mathcal{Y}^{\vee}\to U\) respectively. 
Then \((Y,Y^{\vee})\) is a mirror pair.
\end{conjecture}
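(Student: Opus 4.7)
The plan is to establish the conjecture through the standard two-stage mirror symmetry test, following the strategy outlined in the introduction. The first stage is the topological test, in which one compares the (orbifold) Hodge diamonds and Euler characteristics of \(Y\) and \(Y^{\vee}\). The second stage is the quantum test, in which one identifies B-model correlation functions on one side with A-model correlation functions on the other under a suitable mirror map.

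For the topological test, I would first compute the orbifold Hodge numbers of \(Y\) and \(Y^{\vee}\). Since each double cover \(Y\to X\) is branched along a SNC divisor, one has a decomposition \(\mathrm{H}^{*}(Y)=\mathrm{H}^{*}(X)\oplus \mathrm{H}^{*}(X,L^{-1})\), where \(L\) is the line bundle with \(L^{\otimes 2}\cong \mathscr{O}(\mathrm{div}(s))\); the twisted summand can be accessed via Hodge-theoretic methods on the MPCP resolutions \(X\) and \(X^{\vee}\) together with an adjunction/residue argument along the branch locus. Because \(\mathrm{div}(s)\) factors through the gauge-fixing sections \(s_{i,1}\) supported on the toric boundary, the combinatorics controlling the twisted piece is expressible in terms of the nef-partition polytopes \(\Delta_{i}\) and \(\nabla_{j}\), and Batyrev--Borisov duality then produces the mirror exchange \(h^{p,q}(Y)=h^{n-p,q}(Y^{\vee})\). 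This is precisely the computation carried out in \cite{2020-Hosono-Lee-Lian-Yau-mirror-symmetry-for-double-cover-calabi-yau-varieties}, which I would invoke.

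For the quantum test, which is the new content, I would proceed as follows, focusing first on the threefold case. On the B-side, the periods of the equisingular family \(\mathcal{Y}\to V\) satisfy a GKZ \(A\)-hypergeometric system with a \emph{fractional} exponent \(\beta\), reflecting the fact that \(Y\) is a fractional complete intersection. The secondary fan attached to this system provides a toroidal compactification of \(V\), and at a LCSL one extracts the canonical integral basis of period solutions from which the mirror map \(q=q(z)\) and the Yukawa couplings are constructed. On the A-side, I would assemble the untwisted part of the genus-zero orbifold Gromov--Witten invariants of \(Y^{\vee}\) into a quantum cohomology ring and compute the \(J\)-function of \(Y^{\vee}\). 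The crucial technical device is the cohomology-valued series (Givental-type \(I\)-function, up to an overall \(\Gamma\)-factor) introduced in \cite{2022-Lee-Lian-Yau-on-calabi-yau-fractional-complete-intersections}: its principal parts reproduce the generators of the classical cohomology ring of \(X^{\vee}\), so its series expansion both generates the B-model periods and encodes the intersection data governing the A-model correlators. Matching \(I\) and \(J\) via the mirror transformation then yields the desired quantum equivalence.

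The main obstacle is the quantum side. Unlike the classical Batyrev--Borisov setting, here the hypergeometric exponent is fractional and the target is an orbifold Calabi--Yau, so both sides involve subtle choices that must be matched: on the A-side one must identify precisely the untwisted sector and argue that the twisted sectors decouple from the relevant correlation functions, while on the B-side one must verify that the \(\Gamma\)-factor and the fractional data produce the correct integral mirror map. Proving that the cohomology-valued \(I\)-function really reproduces the \(J\)-function of the untwisted orbifold quantum cohomology of \(Y^{\vee}\) -- that is, establishing a fractional analogue of the mirror theorem in the spirit of Givental and Lian--Liu--Yau -- is the step where genuine new work is required, and extending such a theorem beyond the threefold case, where monodromy and higher-degree obstructions can no longer be controlled by classical hypergeometric miracles, is where I expect the deepest difficulty to lie.
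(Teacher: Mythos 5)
The statement you are asked to prove is labelled a \emph{conjecture} in the paper, and the paper does not prove it: it only assembles supporting evidence, namely the topological test (the Euler characteristic identity \(\chi(Y)=(-1)^{n}\chi(Y^{\vee})\), the Hodge number computation \(h^{p,q}(Y)=h^{p,q}(X)\) for \(p+q\ne n\), and the resulting relation \(h^{p,q}(Y)=h^{3-p,q}(Y^{\vee})\) in dimension three) and the quantum test carried out in explicit examples such as the double cover of \(\mathbf{P}^{3}\) branched along four hyperplanes and a quartic. Your proposal reproduces exactly this two-stage program, so as a description of the evidence it is faithful to the paper. But it is not a proof, and you say so yourself: the step you defer --- establishing a fractional/orbifold analogue of the mirror theorem identifying the cohomology-valued \(I\)-function with the \(J\)-function of the untwisted orbifold Gromov--Witten theory in general --- is precisely the open content of the conjecture. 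Even granting that step, matching one triple correlation function under the mirror map in the threefold case is a consistency check, not a proof that \((Y,Y^{\vee})\) is a mirror pair; indeed the conjecture as stated does not even fix which equivalence (Hodge-theoretic, enumerative, homological) is to be established, so there is nothing yet that could be \emph{proved} without first making that choice.

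Two smaller points of comparison with the paper. First, you run the duality in the opposite direction: you place the B-model on \(\mathcal{Y}\to V\) and the A-model on \(Y^{\vee}\), whereas the paper computes periods of \(\mathcal{Y}^{\vee}\to U\) (whose GKZ data is read off from the dual nef-partition on \(X^{\vee}\)) and orbifold Gromov--Witten invariants of \(Y\); this is harmless in principle but means your \(I\)-function would be built from the other matrix \(A\). Second, your assertion that the eigensheaf decomposition plus Batyrev--Borisov duality ``then produces'' \(h^{p,q}(Y)=h^{n-p,q}(Y^{\vee})\) overstates what the cited computation gives: away from the middle degree one only gets \(h^{p,q}(Y)=h^{p,q}(X)\), and the full mirror exchange of Hodge numbers is deduced in the paper only for \(n=3\), where it follows from the Euler characteristic identity. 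In higher dimension the middle-degree (orbifold or stringy) Hodge numbers require a separate argument, which your sketch does not supply.
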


\section{Mirror symmetry: the topological test}

In this section, we compute the topological Euler characteristic
of the double cover Calabi--Yau varieties as well as their Hodge numbers. 
For an \(n\)-dimensional (quasi-projective) 
variety \(W\), the topological Euler characteristic 
is defined to be the alternating sum
\begin{equation*}
\chi(W) := \sum_{k=0}^{2n} (-1)^{k}\dim \mathrm{H}^{k}(W)
=\sum_{k=0}^{2n} (-1)^{k}\dim \mathrm{H}_{c}^{k}(W).
\end{equation*}
Let \(\pi\colon Y\to X\) be a double cover branched along \(D\).
Then \(Y\setminus \pi^{-1}(D)\to X\setminus D\) is a finite \'{e}tale cover of degree \(2\).
We then have 
\begin{align}
\label{equation:euler-charactersitic-branched-covers}
\begin{split}
\chi(Y) &= \chi(\pi^{-1}(D)) + \chi(Y\setminus \pi^{-1}(D))\\ 
&= \chi(D) + 2\cdot\chi(X\setminus D)\\
&= \chi(D) + 2(\chi(X)-\chi(D)).
\end{split}
\end{align}

To compute the topological Euler characteristic
of the branch divisor, we need a result by
Danilov and Khovanskii. To this end, let us intorduce some
terminologies and notation.
Let \(Z_{1},\ldots,Z_{r}\) be nef torus 
invariant divisors on a projective toric manifold 
\(X\) and \(\Delta_{Z_{i}}\)
be the polytope of \(Z_{i}\).
Put
\begin{equation*}
\Delta_{Z_{1}}\star\cdots\star\Delta_{Z_{r}}:=
\mathrm{Conv}(\{\mathrm{e}_{1}\}\times\Delta_{Z_{1}},\ldots,
\{\mathrm{e}_{r}\}\times\Delta_{Z_{r}})
\end{equation*} 
in \(\mathbb{R}^{r}\times M_{\mathbb{R}}\).
Here \(\{\mathrm{e}_{1},\ldots,\mathrm{e}_{r}\}\)
is the standard basis of \(\mathbb{R}^{r}\).
This is called the \emph{Cayley polytope} of \(\Delta_{Z_{1}},\ldots,
\Delta_{Z_{r}}\).
For a nonempty subset \(I\subset\{1,\ldots,r\}\), we similarly put
\begin{equation*}
\Delta^{\star I}:=\star_{i\in I} \Delta_{Z_{i}}\subset 
\mathbb{R}^{|I|}\times M_{\mathbb{R}}.
\end{equation*}
Let \(\Lambda_{I}\) be 
the pyramids with vertex \(\mathbf{0}\) and 
base \(\Delta^{\star I}\) 
in \(\mathbb{R}^{|I|}\times M_{\mathbb{R}}\).
Now we can state the result by Danilov and Khovanskii.

\begin{theorem}[cf.~\cite{1986-Danilov-Khovanskii-newton-polyhedra-and-an-algorithm-for-calculating-hodge-deligne-numbers}*{\S6}]
\label{theorem:DK-euler-characteristic}
For general \(D_{i}\) in the linear system \(|Z_{i}|\), we have
\begin{equation*}
\chi(D_{1}\cap\cdots\cap D_{r}\cap T) =
-\sum_{I} (-1)^{n+|I|-1}\mathrm{vol}_{n+|I|}(\Lambda_{I}),
\end{equation*}
where the summation runs over all nonempty subsets \(I\subset\{1,\ldots,k\}\)
and \(\mathrm{vol}_{d}\) is the normalized volume in \(d\)-dimensional spaces.
\end{theorem}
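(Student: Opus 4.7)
The plan is to apply a Cayley-type trick to trade the complete intersection for a single hypersurface in an enlarged ambient space, and then stratify by torus orbits to reduce everything to Danilov--Khovanskii's formula for one hypersurface in a torus. Writing $D_{i}=\{f_{i}=0\}$ for generic Laurent polynomials with Newton polytope $\Delta_{Z_{i}}$ and $V:=D_{1}\cap\cdots\cap D_{r}\cap T$, I would form
\begin{equation*}
F(y,x) \ := \ y_{1}f_{1}(x)+\cdots+y_{r}f_{r}(x)
\end{equation*}
as a section on $\mathbf{P}^{r-1}\times T$ and study $Z:=\{F=0\}\subset\mathbf{P}^{r-1}\times T$. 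Projecting to $T$, the fibre is a hyperplane $\mathbf{P}^{r-2}\subset\mathbf{P}^{r-1}$ over each $x\notin V$ and all of $\mathbf{P}^{r-1}$ over each $x\in V$, so additivity of $\chi$ together with $\chi(T)=0$ forces
\begin{equation*}
\chi(Z) \ = \ (r-1)\bigl(\chi(T)-\chi(V)\bigr)+r\,\chi(V) \ = \ \chi(V).
\end{equation*}

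Next I would stratify the base $\mathbf{P}^{r-1}$ by its torus orbits $T_{J}\cong(\mathbb{C}^{\ast})^{|J|-1}$, one for each nonempty $J\subset\{1,\ldots,r\}$. Choosing a representative $y_{i_{0}}=1$ for some $i_{0}\in J$ identifies $T_{J}\times T$ with $(\mathbb{C}^{\ast})^{|J|-1}\times T$, and under this identification $Z\cap(T_{J}\times T)$ is cut out by the Laurent polynomial
\begin{equation*}
\widetilde{F}_{J} \ := \ f_{i_{0}}+\sum_{i\in J\setminus\{i_{0}\}}y_{i}f_{i}.
\end{equation*}
Its Newton polytope in $\mathbb{R}^{J\setminus\{i_{0}\}}\times M_{\mathbb{R}}$ is the affine, volume-preserving image of the Cayley polytope $\Delta^{\star J}$ obtained by parameterizing the hyperplane $\sum_{j}y_{j}=1$ via the remaining coordinates, so the normalized volumes coincide.

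Applying the single-hypersurface Danilov--Khovanskii formula inside the $(n+|J|-1)$-dimensional torus $(\mathbb{C}^{\ast})^{|J|-1}\times T$ then gives
\begin{equation*}
\chi\bigl(Z\cap(T_{J}\times T)\bigr) \ = \ (-1)^{n+|J|-2}\,\mathrm{vol}_{n+|J|-1}(\Delta^{\star J}).
\end{equation*}
The elementary identity $\mathrm{vol}_{n+|J|}(\Lambda_{J})=\mathrm{vol}_{n+|J|-1}(\Delta^{\star J})$ (a pyramid with apex at the origin over a base lying in a unit-height affine hyperplane has the same normalized volume as its base) converts this into the pyramid volume, and summing over all nonempty $J$ via the stratification recovers
\begin{equation*}
\chi(V) \ = \ \sum_{\emptyset\neq J}\chi\bigl(Z\cap(T_{J}\times T)\bigr) \ = \ -\sum_{\emptyset\neq J}(-1)^{n+|J|-1}\mathrm{vol}_{n+|J|}(\Lambda_{J}).
\end{equation*}

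Structurally this argument is quite clean; the real obstacle is genericity. The single-hypersurface formula requires the defining Laurent polynomial to be non-degenerate with respect to its Newton polytope, and one needs this \emph{simultaneously} for every $\widetilde{F}_{J}$ as $J$ ranges over the nonempty subsets of $\{1,\ldots,r\}$. The hard part of the write-up will therefore be a Bertini-type openness statement showing that the locus of $(f_{1},\ldots,f_{r})\in\prod_{i}\mathrm{H}^{0}(X,Z_{i})$ for which all such non-degeneracy conditions hold simultaneously is Zariski open and nonempty; this is the precise point at which the hypothesis ``general $D_{i}$'' is used.
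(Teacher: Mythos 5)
Your argument is correct, but note that the paper does not actually prove this statement: it is quoted verbatim from Danilov--Khovanskii (\S6) as a known result, so there is no in-paper proof to compare against. Your Cayley-trick reduction is the standard modern route to it and all the pieces check out: the fibrewise computation of $\chi(Z)$ over $T$ (fibres $\mathbf{P}^{r-2}$ off $V$ and $\mathbf{P}^{r-1}$ on $V$, so $\chi(Z)=\chi(V)$ using $\chi(T)=0$), the orbit stratification of $\mathbf{P}^{r-1}$, the identification of the Newton polytope of $\widetilde{F}_{J}$ with a unimodular affine image of $\Delta^{\star J}$ (the dehomogenization $y_{i_0}=1$ maps the affine lattice of the hyperplane $\sum_{j\in J}y_j=1$ isomorphically onto $\mathbb{Z}^{J\setminus\{i_0\}}\times M$), the height-one pyramid identity $\mathrm{vol}_{n+|J|}(\Lambda_J)=\mathrm{vol}_{n+|J|-1}(\Delta^{\star J})$, and the signs (I verified the cases $r=1$ and $r=2$ for hyperplanes in $\mathbf{P}^2$ and $\mathbf{P}^3$ against direct computation). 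Two small remarks. First, when some $\Delta^{\star J}$ fails to be full-dimensional in its hyperplane, the single-hypersurface formula still applies with vanishing normalized volume (the zero locus acquires a torus factor), so no case is lost. Second, I would downgrade your assessment of the genericity step: Newton-nondegeneracy of each $\widetilde{F}_J$ is a nonempty Zariski-open condition on the coefficient vector of $(f_i)_{i\in J}$, which is literally a sub-tuple of the coefficient vector of $(f_1,\ldots,f_r)$ ranging over an affine space (the lattice points of the $\Delta_{Z_i}$ give a basis of $\mathrm{H}^0(X,Z_i)$), so the simultaneous condition over the finitely many $J$ is a finite intersection of nonempty opens in an irreducible parameter space; this is routine rather than the hard part. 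The only step worth writing out carefully is the multiplicativity of $\chi$ for the projection $Z\to T$ over the two strata $T\setminus V$ and $V$, which holds because the map is Zariski-locally trivial over each stratum (or simply by additivity of $\chi_c$ and $\chi=\chi_c$ for complex algebraic varieties).
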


Applying the theorem to the case \(Z_{i} = E_{i}\) and \(X
\to\mathbf{P}_{\Delta}\) is a MPCP desingularization, we can
compute the topological Euler characteristic of
the branch divisor which turns out to be
\begin{align*}
\chi(D)&=\chi(X) - \chi(T\cap(D_{1}\cup\cdots\cup D_{r}))\\
&=\chi(X) + (-1)^{n} \mathrm{vol}_{n+r}(\Lambda)
\end{align*}
where \(D_{i}\) is a general element in \(|E_{i}|\)
and \(\Lambda = \Lambda_{\{1,\ldots,r\}}\).
When \(X^{\vee}\) is smooth,
together with the fact that \(F_{i}\) are nef, one can show that
\(\mathrm{vol}_{n+r}(\Lambda)=\chi(X^{\vee})\).
Denote by \(Y\) (resp.~\(Y^{\vee}\)) a fiber of 
\(\mathcal{Y}\to V\) (resp.~\(\mathcal{Y}^{\vee}\to U\)).
It follows that 
\begin{equation*}
\chi(Y) = \chi(X) + (-1)^{n}\chi(X^{\vee})
\end{equation*}
and therefore \(\chi(Y)=(-1)^{n}\chi(Y^{\vee})\).
To summarize, we obtain
\begin{theorem}[cf.~\cite{2020-Hosono-Lee-Lian-Yau-mirror-symmetry-for-double-cover-calabi-yau-varieties}*{Theorem 2.2}]
Let notation be as above. We have 
\begin{equation*}
\chi(Y)=(-1)^{n}\chi(Y^{\vee}).
\end{equation*}
\end{theorem}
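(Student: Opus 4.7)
The plan is to establish the symmetric formula $\chi(Y) = \chi(X) + (-1)^{n}\chi(X^{\vee})$, from which the claim follows immediately: swapping the roles of $\Delta$ and $\nabla$ gives $\chi(Y^{\vee}) = \chi(X^{\vee}) + (-1)^{n}\chi(X)$, and multiplying this by $(-1)^{n}$ matches the formula for $\chi(Y)$. The starting point is the double cover identity from \eqref{equation:euler-charactersitic-branched-covers}, which simplifies to $\chi(Y) = 2\chi(X) - \chi(D)$, so the problem reduces to computing $\chi(D)$ for the SNC branch divisor $D = \mathrm{div}(s)$. The key observation is that each $s_{i,1}$, being the canonical section of $E_{i}$ corresponding to $\mathbf{0} \in \Delta_{i}$, has vanishing locus supported on the toric boundary of $X$; together with the smooth hypersurfaces $D_{i} := \mathrm{div}(s_{i,2}) \in |E_{i}|$, this forces $X \setminus D$ to equal $T \setminus \bigcup_{i}(D_{i} \cap T)$. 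Additivity of $\chi$ together with $\chi(T) = 0$ then reduces $\chi(D)$ to the Euler characteristic of $T \cap \bigcup_{i}D_{i}$.

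The next step is combinatorial. I would expand $\chi(T \cap \bigcup_{i}D_{i})$ by inclusion-exclusion, apply Theorem \ref{theorem:DK-euler-characteristic} of Danilov--Khovanskii to each torus-restricted intersection $T \cap \bigcap_{i \in I}D_{i}$, and organize the resulting double sum of normalized pyramid volumes $\mathrm{vol}_{n+|J|}(\Lambda_{J})$. A careful sign bookkeeping should collapse this sum so that only the single top contribution from the full Cayley polytope $\Lambda = \Lambda_{\{1,\ldots,r\}}$ survives, producing a clean expression of the form $\chi(D) = \chi(X) + (-1)^{n}\mathrm{vol}_{n+r}(\Lambda)$. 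This combinatorial collapse is routine once one believes the inclusion-exclusion identity, but it must be executed with care to track how the lower-dimensional Cayley data from subsets $I \subsetneq \{1,\ldots,r\}$ cancel against each other.

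The decisive step, and the one I expect to be the main obstacle, is identifying $\mathrm{vol}_{n+r}(\Lambda)$ with $\chi(X^{\vee})$. Under Batyrev--Borisov duality, $\Lambda$ encodes the Minkowski decomposition $\nabla = \nabla_{1} + \cdots + \nabla_{r}$ on the mirror side, and its normalized volume can be interpreted as a top intersection number built from the nef classes $F_{1},\ldots,F_{r}$ on $\mathbf{P}_{\nabla}$ and hence on $X^{\vee}$. Under the standing hypothesis that $X^{\vee}$ is smooth, this intersection number must be shown to equal $\int_{X^{\vee}} c_{n}(T_{X^{\vee}}) = \chi(X^{\vee})$; this is the step that genuinely uses the smoothness assumption, via a Chern-class computation on the toric variety $X^{\vee}$ combined with the BKK/mixed-volume interpretation of lattice polytope volumes. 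Once $\mathrm{vol}_{n+r}(\Lambda) = \chi(X^{\vee})$ is in hand, substituting back gives $\chi(Y) = \chi(X) + (-1)^{n}\chi(X^{\vee})$, and the theorem follows from the symmetry already explained.
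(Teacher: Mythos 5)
Your proposal follows the paper's argument essentially verbatim: the same reduction $\chi(Y)=2\chi(X)-\chi(D)$, the same observation that the sections $s_{i,1}$ cut out the toric boundary so everything localizes to $T\cap\bigcup_i D_i$, the same inclusion--exclusion/Danilov--Khovanskii collapse to the single Cayley volume $\mathrm{vol}_{n+r}(\Lambda)$, the same identification $\mathrm{vol}_{n+r}(\Lambda)=\chi(X^{\vee})$ using smoothness of $X^{\vee}$ and nefness of the $F_i$, and the same concluding symmetry. The only caveat is a sign in the intermediate formula for $\chi(D)$ (it should come out as $\chi(X)-(-1)^{n}\mathrm{vol}_{n+r}(\Lambda)$ so that $\chi(Y)=\chi(X)+(-1)^{n}\chi(X^{\vee})$, as the $\mathbf{P}^2$ example confirms), a bookkeeping slip you share with the paper's own displayed computation and which does not affect the final statement.
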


We can further determine the Hodge numbers \(h^{p,q}(Y)\) for \(p+q\ne n\).
As observed by Baily 
\cite{1957-Baily-on-the-imbedding-of-v-manifolds-in-projective-space} and Steenbrink \cite{1977-Steenbrink-mixed-hodge-structure-on-the-vanishing-cohomology},
most statements in Hodge theory generalize to orbifolds.
Combined with the affine vanishing theorem,
we can prove the following proposition.
\begin{proposition}
\label{prop:hodge-number}
Let notation be as above and \(\pi\colon Y\to X\)
be the branched double cover. We have
\(h^{p,q}(X) = h^{p,q}(Y)\) for \(p+q\ne n\).
\end{proposition}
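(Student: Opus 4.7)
The plan is to exploit the covering involution \(\sigma\colon Y\to Y\) of the double cover together with the orbifold Hodge theory of Baily and Steenbrink cited just before the statement. Because the branch divisor is SNC, \(Y\) is a \(V\)-manifold, so \(H^{k}(Y,\mathbb{C})\) carries a pure Hodge decomposition \(\bigoplus_{p+q=k}H^{q}(Y,\widetilde{\Omega}_{Y}^{p})\) on which Hodge symmetry and Serre duality continue to hold. The involution acts on this decomposition and produces a splitting into \(\sigma\)-invariant and \(\sigma\)-anti-invariant summands \(H^{p,q}(Y)^{\pm}\). Since \(\pi\) is finite of degree two, the invariant part is identified with \(H^{p,q}(X)\) by averaging, so the proposition reduces to showing \(H^{p,q}(Y)^{-}=0\) whenever \(p+q\neq n\).

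To compute the anti-invariant part, I would adapt Esnault--Viehweg's push-forward formula for double covers to our orbifold setting. Let \(L\) be the line bundle on \(X\) with \(L^{\otimes 2}\cong\mathscr{O}_{X}(D)\), where \(D=\mathrm{div}(s)\). In the smooth case one has \(\pi_{\ast}\Omega_{Y}^{p}\cong\Omega_{X}^{p}\oplus(\Omega_{X}^{p}(\log D)\otimes L^{-1})\), and I would verify the analogous identity
\[
\pi_{\ast}\widetilde{\Omega}_{Y}^{p}\cong\Omega_{X}^{p}\oplus\bigl(\Omega_{X}^{p}(\log D)\otimes L^{-1}\bigr)
\]
in our SNC setting by a local computation at normal-crossing strata of \(D\), where \(Y\) acquires an \(A_{1}\)-type quotient singularity and \(\widetilde{\Omega}_{Y}^{p}\) is described by \(\sigma\)-invariant forms on a local double-cover uniformizer. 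The Calabi--Yau condition \(D\in|{-2K_{X}}|\) forces \(L^{-1}\cong\mathscr{O}_{X}(K_{X})\), whence
\[
H^{p,q}(Y)^{-}\cong H^{q}\bigl(X,\Omega_{X}^{p}(\log D)\otimes K_{X}\bigr).
\]

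It remains to show this last group vanishes for \(p+q\neq n\). Since \(X\to\mathbf{P}_{\Delta}\) is crepant and \(-K_{\mathbf{P}_{\Delta}}\) is ample by reflexivity of \(\Delta\), the class \(-K_{X}\) is big and nef. For \(p+q<n\), the logarithmic Kodaira--Nakano vanishing theorem applied to the pair \((X,D)\) with the big-and-nef twist \(-K_{X}\) gives
\[
H^{q}\bigl(X,\Omega_{X}^{p}(\log D)\otimes K_{X}\bigr)=0.
\]
For \(p+q>n\), I would combine Serre duality with the residue pairing \(\Omega_{X}^{p}(\log D)\otimes\Omega_{X}^{n-p}(\log D)\to\Omega_{X}^{n}(\log D)=K_{X}(D)\); using \(D\sim-2K_{X}\) so that \(K_{X}+D\sim -K_{X}\), the bookkeeping yields
\[
H^{q}\bigl(X,\Omega_{X}^{p}(\log D)\otimes K_{X}\bigr)^{\vee}\cong H^{n-q}\bigl(X,\Omega_{X}^{n-p}(\log D)\otimes K_{X}\bigr),
\]
which vanishes by the previous case since \((n-p)+(n-q)<n\).

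The main obstacle is the orbifold push-forward decomposition in the second step: while Esnault--Viehweg's formula is classical for smooth double covers, in our setting \(Y\) has genuine \(A_{1}\)-type quotient singularities along the pairwise (and higher) intersections of the branch components, and one must verify that the orbifold sheaves \(\widetilde{\Omega}_{Y}^{p}\) really decompose as above rather than picking up extra contributions from the singular locus. Once this identification is secured, the remaining vanishing is a routine application of standard logarithmic vanishing theorems and Serre duality on the smooth toric ambient \(X\).
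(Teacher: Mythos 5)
Your reduction is sound through the eigenspace decomposition: orbifold Hodge theory gives the pure decomposition, averaging identifies the invariant part with \(\mathrm{H}^{p,q}(X)\), and the Esnault--Viehweg push-forward identity for the normalization of a double cover along an SNC divisor does identify the anti-invariant piece with \(\mathrm{H}^{q}(X,\Omega^{p}_{X}(\log D)\otimes L^{-1})\) with \(L^{-1}\cong \mathscr{O}_X(K_{X})\); your Serre-duality computation showing this group is self-dual under \((p,q)\mapsto(n-p,n-q)\) is also correct. The genuine gap is the final vanishing step. ``Logarithmic Kodaira--Nakano with a big-and-nef twist'' is not a theorem: Nakano-type vanishing, logarithmic or not, requires ampleness and fails for big and nef line bundles (Ramanujam's counterexamples). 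In the generality of the proposition, \(X\) is an MPCP desingularization of \(\mathbf{P}_{\Delta}\), so \(-K_{X}\) is the pullback of an ample class under a crepant morphism --- big, nef, semiample, but not ample --- and your argument only covers the special cases where \(\mathbf{P}_{\Delta}\) is already smooth.

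The ingredient you are missing, and the one the paper leans on when it says ``combined with the affine vanishing theorem,'' is that the gauge-fixed branch divisor contains the \emph{entire toric boundary}: the factors \(s_{i,1}\) cut out \(E_{i}=\sum_{\rho\in I_{i}}D_{\rho}\). Hence \(U:=X\setminus D\) lies inside the torus \(T\) and is the complement of the hypersurfaces \(\{s_{i,2}=0\}\) there, so \(U\) is affine. The anti-invariant cohomology of \(Y\) is \(\mathrm{H}^{\bullet}(U,\mathscr{L})\) for the rank-one local system \(\mathscr{L}\) with monodromy \(-1\) around every component of \(D\). Artin vanishing gives \(\mathrm{H}^{k}(U,\mathscr{L})=0\) for \(k>n\); since \(\mathscr{L}\) has nontrivial monodromy around every boundary component, the natural map \(\mathrm{H}^{k}_{c}(U,\mathscr{L})\to \mathrm{H}^{k}(U,\mathscr{L})\) is an isomorphism, and Poincar\'e duality for the self-dual \(\mathscr{L}\) then kills \(k<n\) as well. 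This concentrates the anti-invariant part in degree \(n\) and finishes the proof; it is precisely the argument of Arapura cited next to the proposition. If you insist on a coherent-vanishing route, you must replace log Kodaira--Nakano by a vanishing theorem adapted to eigensheaves of cyclic covers with affine complement, which amounts to the same input.
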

For more details, we refer the reader to 
\cites{2020-Hosono-Lee-Lian-Yau-mirror-symmetry-for-double-cover-calabi-yau-varieties,2012-Arapura-hodge-theory-of-cyclic-covers-branched-over-a-union-of-hyperplanes}.
\begin{corollary}
When \(n=3\), we have \(h^{p,q}(Y)=h^{3-p,q}(Y^{\vee})\) for all \(p,q\).
\end{corollary}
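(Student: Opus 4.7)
The plan is to use Proposition~\ref{prop:hodge-number} together with the Calabi--Yau symmetries of the Hodge diamond to reduce the claim to two nontrivial middle-degree identities, and then to establish those by a combinatorial computation of the middle Hodge number $h^{2,1}(Y)$.

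Since $Y$ is a three-dimensional Calabi--Yau orbifold, Hodge symmetry $h^{p,q}=h^{q,p}$ and Serre duality $h^{p,q}=h^{3-p,3-q}$ — both of which extend to the orbifold setting thanks to Baily and Steenbrink — reduce the Hodge diamond of $Y$ to a small number of independent entries. Applying Proposition~\ref{prop:hodge-number} together with the toric vanishing $h^{p,q}(X)=0$ for $p\neq q$ (valid because $X$ is a smooth projective toric threefold) kills every off-diagonal entry outside the middle degree, leaving only $h^{0,0}(Y)=h^{3,0}(Y)=1$, $h^{1,1}(Y)=h^{1,1}(X)$, and the undetermined $h^{2,1}(Y)$; the same analysis applies to $Y^\vee$ with $X$ replaced by $X^\vee$. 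Organizing the sixteen cases of $h^{p,q}(Y)=h^{3-p,q}(Y^\vee)$, most are trivial ($0=0$ or $1=1$) and the remaining ones collapse, by the above symmetries, to the two assertions
\begin{equation*}
h^{1,1}(X)\,=\,h^{2,1}(Y^\vee)\qquad \text{and}\qquad h^{2,1}(Y)\,=\,h^{1,1}(X^\vee).
\end{equation*}

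To establish these I would compute $h^{2,1}(Y)$ directly using Arapura's decomposition of the cohomology of a cyclic cover branched along an SNC divisor (cited in the paper). This splits $H^{3}(Y;\mathbb{C})$ under the covering involution and realizes the $(-1)$-eigenspace as a twisted cohomology group on $X$ built from the section $s$ and the nef-partition data. Tracking the Hodge filtration through this identification and then applying a Danilov--Khovanskii-style lattice-point count on the Newton polytopes $\Delta_{i}$, one expects to obtain $h^{2,1}(Y)$ as a combinatorial invariant of the nef-partition of $\Delta$. The remaining step is to match this invariant against the standard toric formula $h^{1,1}(X^\vee)=\#\Sigma_{X^\vee}(1)-3$ using the Batyrev--Borisov polytope duality between the $\Delta_{i}$ and the $\nabla_{j}$; the dual identity $h^{2,1}(Y^\vee)=h^{1,1}(X)$ then follows by swapping the roles of $\Delta$ and $\nabla$.

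The hard part will be the orbifold lattice-point computation: one must verify that the Hodge-graded pieces of the twisted cohomology on $X$ realize precisely the count predicted by $\Delta$--$\nabla$ duality — the fractional--complete-intersection analog of the combinatorial matching at the heart of Batyrev--Borisov. A convenient shortcut is that the Euler-characteristic relation $\chi(Y)=-\chi(Y^\vee)$ proved immediately above, combined with Proposition~\ref{prop:hodge-number}, already yields the single linear constraint $h^{1,1}(X)+h^{1,1}(X^\vee)=h^{2,1}(Y)+h^{2,1}(Y^\vee)$. Hence only one of the two middle-degree identities needs to be verified directly; the other is then forced.
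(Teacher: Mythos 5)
Your reduction of the corollary to the two middle-degree identities $h^{1,1}(X)=h^{2,1}(Y^{\vee})$ and $h^{2,1}(Y)=h^{1,1}(X^{\vee})$ is correct, and so is the bookkeeping with Hodge symmetry, Serre duality, and Proposition~\ref{prop:hodge-number}. But as written the proposal does not close: your main route (Arapura's eigensheaf decomposition of $\mathrm{H}^3(Y)$ plus a Danilov--Khovanskii lattice-point count matched against Batyrev--Borisov duality) is only described in the conditional --- ``one expects to obtain,'' ``the hard part will be'' --- and that hard part is precisely the content of the identity you need. Your shortcut via $\chi(Y)=-\chi(Y^{\vee})$ honestly records that it leaves one identity unproved, so the proposal ends with a genuine gap.

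The missed point is that the paper's Euler-characteristic computation gives strictly more than $\chi(Y)=(-1)^n\chi(Y^{\vee})$: it gives $\chi(Y)=\chi(X)+(-1)^n\chi(X^{\vee})$, with each summand known. For $n=3$ a smooth projective toric threefold has $\chi(X)=2+2h^{1,1}(X)$, while the vanishings you already derived give $\chi(Y)=2\bigl(h^{1,1}(Y)-h^{2,1}(Y)\bigr)=2\bigl(h^{1,1}(X)-h^{2,1}(Y)\bigr)$. Substituting,
\begin{equation*}
2h^{1,1}(X)-2h^{2,1}(Y)\;=\;\bigl(2+2h^{1,1}(X)\bigr)-\bigl(2+2h^{1,1}(X^{\vee})\bigr),
\end{equation*}
whence $h^{2,1}(Y)=h^{1,1}(X^{\vee})=h^{1,1}(Y^{\vee})$, and symmetrically $h^{2,1}(Y^{\vee})=h^{1,1}(X)=h^{1,1}(Y)$. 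Both identities follow at once, with no twisted-cohomology or lattice-point computation; this is exactly the chain of reasoning the paper runs in Example~\ref{ex:p3-top}. Your Arapura-based route is plausible and would give finer information (the actual Hodge structure on $\mathrm{H}^3$, not just its dimensions), but for this corollary it is both unfinished and unnecessary.
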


\begin{instance}[Example \ref{ex:p2} continued]
\label{ex:p2-top}
Let us retain the notation in Example \ref{ex:p2}.
On one hand, \(Y\) is a singular \(K3\) surface with \(12\) 
ODPs singularities. A direct computation shows that
\(\chi(Y) = 24-12 = 12\).

On the other hand, \(Y^{\vee}\) is also a singular \(K3\) surface
with at worst ODPs singularities. Let us now compute its topological
Euler characteristic. \(Y^{\vee}\) is
singular over the points on \(X^{\vee}\) where
the branch divisor is singular. A generic 
section in \(\mathrm{H}^{0}(X^{\vee},-K_{X^{\vee}})\)
only touches the Weil divisors associated with
\((-1,-1)\), \((2,-1)\), and \((-1,2)\) with intersection number \(1\).
Together with the wheel of \(\mathbf{P}^{1}s\), we obtain
\(9+3\) ODPs singularities. We conclude that
\begin{equation*}
\chi(Y^{\vee}) = 24 - 12 = 12 = \chi(Y).
\end{equation*}
One can also check that 
\(\mathrm{vol}_{4}(\Lambda_{1})=3\). 
(See the paragraph right before Theorem
\ref{theorem:DK-euler-characteristic} for notation.)

\end{instance}

\begin{instance}
\label{ex:p3-top}
Retain the notation in Example \ref{ex:p3}.
Let us compute that Euler characteristic of \(Y\) directly.
Denote by \(D\) the scheme-theoretic zero of 
a generic section in \(\mathrm{H}^{0}(X,-K_{X})\)
and by \(D_{x}\), \(D_{y}\), \(D_{z}\), \(D_{w}\)
the coordinate hyperplanes.

One can easily compute
\begin{align*}
\chi(D_{a}\cap D_{b})&=2,\\
\chi(D_{a}\cap D_{b}\cap D_{c})&=1,\\
\chi(D\cap D_{a}) &=-4,\\
\chi(D\cap D_{a}\cap D_{b})&=4,
\end{align*}
where \(a,b,c\in\{x,y,z,w\}\) are distinct elements.
It follows that \(\chi(Y)=-60\). Moreover, 
by Proposition \ref{prop:hodge-number}, 
\begin{align*}
h^{1,1}(Y)=h^{1,1}(X)=1,\\
h^{1,0}(Y)=h^{0,1}(Y)=h^{2,0}(Y)=h^{0,2}(Y)=0.
\end{align*}
This implies that \(h^{2,1}(Y)=31\).

On the other hand, since \(\chi(X^{\vee})=\mathrm{vol}_{3}(\Delta)=
64\) and \(X^{\vee}\)
is smooth and toric, we have \(h^{1,1}(X^{\vee})=(64-2)/2=31\).
By Proposition \ref{prop:hodge-number}, \(h^{1,1}(Y^{\vee})=
h^{1,1}(X^{\vee})=31\). 
Together with \(\chi(Y^{\vee})=60\), we see that 
\(h^{2,1}(Y^{\vee})=1\).
\end{instance}

\section{Mirror symmetry: the quantum test}
The purpose of this section is to explain the quantum test.
Let \(\mathcal{Y}^{\vee}\to U\)
be the singular Calabi--Yau family constructed in 
\S\ref{subsection:singular-double-covers}.
Let \(\Sigma\) be the fan defining \(X\).
By its very construction, we see that
\begin{equation*}
\mathrm{H}^{0}(X^{\vee},F_{i}) \cong \bigoplus_{\rho\in \nabla_{i}\cap N} 
\mathbb{C}\cdot t^{\rho}.
\end{equation*}
and that the set \(I_{k}\) in the nef-partition 
\(\Sigma(1)=I_{1}\dot\cup\cdots\dot\cup I_{r}\)
is identified with 
\begin{equation*}
\{\rho\in \Sigma(1)~|~\mathbf{0}\ne \rho\in \nabla_{k}\cap N\}.
\end{equation*}
Writing \(I_{k}=\{\rho_{k,1},\ldots,\rho_{k,n_{k}}\}\) with
\(n_{k} = \# I_{k}\), we put
\(\nu_{i,j}:=(\rho_{i,j},\delta_{1,i},\ldots,\delta_{r,i})\) 
and additionally
\(\nu_{i,0}:=(\mathbf{0},\delta_{1,i},\ldots,\delta_{r,i})\),
where \(\delta_{i,j}\) is the Kronecker delta.
Let 
\begin{equation*}
A=
\begin{bmatrix}
\nu_{1,0}^{\intercal} & \cdots & \nu_{r,n_{r}}^{\intercal}
\end{bmatrix}\in\mathrm{Mat}_{(n+r)\times(p+r)}(\mathbb{Z})~
\mbox{where}~
p=n_{1}+\cdots+n_{r}.
\end{equation*}
It can be shown that the period integrals
for \(\mathcal{Y}^{\vee}\to U\) are governed
by the GKZ \(A\)-hypergeometric system associated 
with the matrix \(A\) and 
a \emph{fractional} exponent 
\begin{equation*}
\beta = 
\begin{bmatrix}\mathbf{0} & -1/2 & \cdots & -1/2
\end{bmatrix}^{\intercal}
\in\mathbb{Q}^{n+r}.
\end{equation*}
Let \(Y^{\vee}\) be a reference fiber 
in \(\mathcal{Y}^{\vee}\to U\) and 
\(D\) be the branch divisor of 
the cover \(Y^{\vee}\to X^{\vee}\). 
Then \(\pi\colon Y^{\vee}\setminus D\to X^{\vee}\setminus D\)
is an \'{e}tale double cover. Denote by \(\mathscr{L}\)
the unique non-trivial eigensheaf in 
\(\pi_{\ast}\mathbb{C}_{Y^{\vee}\setminus D}\).
The period integrals over \(X^{\vee}\setminus D\)
are of the form
\begin{equation}
\label{eq:general-period}
\Pi_{\gamma}(x):=\int_{\gamma} 
\frac{1}{u_{1,2}^{1/2}\cdots u_{r,2}^{1/2}}
\frac{\mathrm{d}t_{1}}{t_{1}}\wedge\cdots
\wedge\frac{\mathrm{d}t_{n}}{t_{n}},
\end{equation}
where \(\gamma\) is an \(n\)-cycle 
in \(X^{\vee}\setminus D\) with coefficient in 
\(\mathscr{L}\)
and 
\begin{equation*}
u_{i,2} = x_{i,0}+\sum_{j=1}^{n_{i}} x_{i,j}t^{\rho_{i,j}}\in 
\mathrm{H}^{0}(X^{\vee},F_{i})
\end{equation*}
is the universal section. 
We shall call \(\Pi_{\gamma}(x)\)
an \emph{affine period integral}.
It is easy to check that \(\Pi_{\gamma}(x)\)
is annihilated by the relevant GKZ \(A\)-hypergeometric system.
For convenience, we also define 
the \emph{normalized affine period integrals}
to be
\begin{equation*}
\bar{\Pi}_{\gamma}(x):=\left(\prod_{i=1}^{r} x_{i,0}\right)^{1/2}
\Pi_{\gamma}(x).
\end{equation*}

Since \(\bar{\Pi}_{\gamma}(x)\)
is \((\mathbb{C}^{\ast})^{r+n}\)-invariant,
it descends to a local section of a locally constant sheaf on 
a suitable open subset in \((\mathbb{C}^{\ast})^{r+p}
\slash (\mathbb{C}^{\ast})^{r+n}\). 
Regarding \(A\) as a map \(\mathbb{Z}^{r+p}\to\mathbb{Z}^{r+n}\),
we have an identification
\begin{equation*}
(\mathbb{C}^{\ast})^{r+p}
\slash (\mathbb{C}^{\ast})^{r+n}\cong 
\mathrm{Hom}_{\mathbb{Z}}(L,\mathbb{C}^{\ast}),
\end{equation*}
where \(L=\mathrm{ker}(A)\). Any
complete fan \(F\) in \(L^{\vee}\otimes{\mathbb{R}}\)
determines a toric compactification \(X_{F}\).
Recall that a smooth boundary point \(p\in X_{F}\)
is called a maximal degeneracy point if near \(p\)
there exists exactly one normalized affine period integral which
extends to \(p\) holomorphically.
Following the ideas
in \cite{1997-Hosono-Lian-Yau-maximal-degeneracy-points-of-gkz-systems},
the algebraic torus \((\mathbb{C}^{\ast})^{r+p}
\slash (\mathbb{C}^{\ast})^{r+n}\) admits a toric compactification
via the \emph{secondary fan} \(S\Sigma\)
associated to \(A\). Moreover, using the toric ideal associated to 
\(A\), we can define another fan \(G\Sigma\), called
the Gr\"{o}bner fan, which refines \(S\Sigma\).
In the present case, we can prove that
\begin{theorem}
\label{thm:main-theorem-1}
For every toric resolution \(X_{G\Sigma'}\to X_{G\Sigma}\), there exists
at least one maximal degeneracy point in \(X_{G\Sigma'}\). 
\end{theorem}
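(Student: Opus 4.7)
The plan is to adapt, to the fractional setting, the argument used by Hosono, Lian, and Yau in the classical integer GKZ case \cite{1997-Hosono-Lian-Yau-maximal-degeneracy-points-of-gkz-systems}. Fix a toric resolution $X_{G\Sigma'}\to X_{G\Sigma}$ and a top-dimensional cone $\sigma\in G\Sigma'(d)$ with $d:=\mathrm{rk}(L)$. By smoothness, the primitive generators of $\sigma$ form a $\mathbb{Z}$-basis $b^{(1)},\ldots,b^{(d)}$ of $L^{\vee}$; let $\ell^{(1)},\ldots,\ell^{(d)}\in L$ be the dual basis, so that $z_a:=x^{\ell^{(a)}}$ are local coordinates near the torus-fixed point $p_\sigma$. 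It suffices to exhibit one such $\sigma$ at which precisely one normalized affine period extends holomorphically across $p_\sigma$.

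For existence, I solve $A\rho=\beta$ by the rational vector $\rho\in\mathbb{Q}^{p+r}$ with $\rho_{i,0}=-1/2$ for all $i$ and all other entries vanishing; this works because $\nu_{i,0}$ is supported in row $n+i$ with entry $1$, and because the top block of $A\rho$ only sees the entries $\rho_{i,j}$ for $j\ge 1$. Form the $B$-series
\begin{equation*}
B_\sigma(x;\rho):=\sum_{\lambda\in L}\frac{x^{\lambda+\rho}}{\prod_{k}\Gamma(\lambda_k+\rho_k+1)},
\end{equation*}
which is standardly annihilated by the $A$-hypergeometric system with exponent $\beta$. For $j\ge 1$, vanishing of the Gamma reciprocal restricts the sum to $\lambda\in L$ with $\lambda_{i,j}\ge 0$; the Gr\"{o}bner refinement enters precisely to guarantee that $\sigma$ can be chosen so that this supporting monoid coincides with the submonoid of $L$ generated by $\ell^{(1)},\ldots,\ell^{(d)}$, making $x^\lambda$ a genuine monomial in $z_1,\ldots,z_d$ with nonnegative integer exponents. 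The normalization $(\prod_{i=1}^{r} x_{i,0})^{1/2}$ cancels $x^\rho$ exactly, because the half-integer shifts sit only in the $x_{i,0}$ slots; what remains is a formal power series in $z_1,\ldots,z_d$ with constant term $1$, whose convergence follows from the standard hypergeometric estimates as in \cite{1997-Hosono-Lian-Yau-maximal-degeneracy-points-of-gkz-systems}.

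Uniqueness is established via the Frobenius deformation. Introducing $\epsilon\in L_{\mathbb{C}}$ and considering $B_\sigma(x;\rho+\epsilon)$, the Taylor expansion at $\epsilon=0$ produces coefficients that span the local solution space at $p_\sigma$, whose dimension equals the normalized volume of the Cayley polytope attached to $A$. Every $\epsilon$-derivative of positive order introduces at least one $\log z_a$ factor coming from expanding $\prod_{a} z_a^{\epsilon_a}$, and hence fails to be single-valued on any punctured neighborhood of $p_\sigma$. Consequently $B_\sigma(x;\rho)$ itself, suitably normalized, is the unique single-valued holomorphic local solution, and $p_\sigma$ is a maximal degeneracy point.

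The main obstacle is the verification that the half-integer shifts in $\rho$ and the normalization by $(\prod x_{i,0})^{1/2}$ conspire so that no residual multivaluedness survives in the local coordinates $z_a$ at $p_\sigma$. Because the fractional part of $\rho$ is concentrated in the $(i,0)$-slots that the normalization targets, and because $\ell^{(a)}$ pairs integrally with every coordinate by virtue of $\ell^{(a)}\in L\subset\mathbb{Z}^{p+r}$, each $z_a^{\epsilon_a}$ expansion produces only logarithmic, rather than additional fractional, terms. This is precisely what allows the classical Frobenius/dimension-counting argument to pass verbatim from the integer GKZ case to the present fractional one, and it relies essentially on both the Cayley block structure of $A$ inherited from the nef-partition and on the special shape $\beta=(\mathbf{0},-1/2,\ldots,-1/2)^{\intercal}$.
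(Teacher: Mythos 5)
Your route is the one the paper intends: the explicit $\Gamma$-series supported on $\{\lambda\in L:\lambda_{i,j}\ge 0,\ j\ge 1\}$, the observation that the fractional part of the exponent sits only in the $(i,0)$-slots and is exactly cancelled by the normalization $(\prod_i x_{i,0})^{1/2}$, and uniqueness via the Frobenius deformation; indeed your $B_\sigma(x;\rho)$ agrees with the displayed solution \eqref{eq:hol-series-sol-1} after applying the reflection formula to the $(i,0)$-factors, and the completeness/log structure you invoke is what Theorem \ref{prop:coh-ser-solu-gkz-intro} packages. However, there is one genuine gap. A maximal degeneracy point is defined by counting holomorphic \emph{normalized affine period integrals}, not holomorphic solutions of the GKZ system. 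Your argument shows that the space of single-valued holomorphic GKZ solutions at $p_\sigma$ is one-dimensional, which only bounds the number of holomorphic periods above by one; it does not rule out that this number is zero. You must additionally realize $B_\sigma(x;\rho)$ as $\bar\Pi_\gamma$ for an actual twisted cycle $\gamma$: take $\gamma$ to be the real $n$-torus $|t_1|=\cdots=|t_n|=\epsilon$ with a fixed branch of $\prod_i u_{i,2}^{-1/2}$ (legitimate near $p_\sigma$ since each $u_{i,2}$ is a small perturbation of $x_{i,0}$ there), expand each $(1+\sum_j x_{i,j}t^{\rho_{i,j}}/x_{i,0})^{-1/2}$ by the binomial series in \eqref{eq:general-period}, and extract the residue; the binomial coefficients reproduce exactly the factors $(-1)^{\ell_{i,0}}\Gamma(1/2-\ell_{i,0})/\Gamma(1/2)\prod_j\ell_{i,j}!$ of \eqref{eq:hol-series-sol-1}. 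Without this step the theorem as stated is not proved.

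Two smaller points. The support monoid should be required to be \emph{contained in} $\sigma^{\vee}$, not to \emph{coincide with} the free monoid on $\ell^{(1)},\ldots,\ell^{(d)}$: coincidence would force the Mori cone itself to be simplicial and unimodular, which fails in general, and containment is all your conclusion uses; note also that this containment for a suitable Gr\"obner cone is precisely the technical heart of the adaptation of \cite{1997-Hosono-Lian-Yau-maximal-degeneracy-points-of-gkz-systems}, which you assert rather than verify. Finally, in the uniqueness step, ``each positive-order $\epsilon$-derivative carries a $\log z_a$'' does not by itself exclude a log-free \emph{linear combination} of such derivatives; you need the standard leading-term argument that the top log-degree coefficient of each $\langle B_X^{\alpha}(x),h\rangle$ is a nonzero multiple of the holomorphic series, so that killing all logs forces all components on positive-degree dual classes to vanish.
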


We can write down the unique holomorphic normalized 
period near \(p\) explicitly.
Pick an \(\alpha\in\mathbb{C}^{p+r}\) such that 
\(A(\alpha)=\beta\).
In the present case, we can pick
\(\alpha=(\alpha_{i,j})\) with
\(\alpha_{i,0}=-1/2\) for \(i=1,\ldots,r\) 
and \(\alpha_{i,j}=0\) for \(i=1,\ldots,r\) and \(j=1,\ldots,n_{i}\).
One solution to the GKZ system is given by
\begin{equation}
\label{eq:hol-series-sol-1}
\sum_{\ell\in L} 
\frac{\prod_{i=1}^{r}\Gamma(-\alpha_{i,0}-\ell_{i,0})}
{\prod_{i=1}^{r}\Gamma(-\alpha_{i,0})
\prod_{i=1}^{r}\prod_{j=1}^{n_i} \Gamma(\ell_{i,j}+\alpha_{i,j}+1)}
(-1)^{\sum_{i} \ell_{i,0}}x^{\ell+\alpha}.
\end{equation}
Here the components of \(\ell\in 
L\subset \mathbb{Z}^{p+r}\) are labeled by \((i,j)\)
with \(1\le i\le r\) and \(0\le j\le n_{i}\) and 
\(x_{i,j}\) are the 
coordinates for our GKZ \(A\)-hypergeometric system.

\subsection[]{Generalized Frobenius method}
Now let us explain the generalized Frobenius method.
Let \(D_{i,j}\) be the toric divisor associated with \(\rho_{i,j}\). 
Combining \eqref{eq:hol-series-sol-1}
with these cohomology classes, we define a cohomology-valued power series
\begin{equation}
\label{eq:coh-valued-series}
B_{X}^{\alpha}(x):=\left(\sum_{\ell\in \overline{\mathrm{NE}}(X)\cap L} 
\mathcal{O}_{\ell}^{\alpha} x^{\ell+\alpha}\right)
\exp\left(\sum_{i=1}^{r}\sum_{j=0}^{n_i}(\log x_{i,j}) D_{i,j}\right),
\end{equation}
where \(\overline{\mathrm{NE}}(X)\) is the Mori cone of \(X\) and 
\begin{equation*}
\mathcal{O}^\alpha_\ell
:=\frac{\prod_{i=1}^{r}(-1)^{\ell_{i,0}}
\Gamma(-D_{i,0}-\ell_{i,0}-\alpha_{i,0})}{\prod_{i=1}^{r}\Gamma(-\alpha_{i,0})
\prod_{i=1}^r\prod_{j=1}^{n_i}\Gamma(D_{i,j}+\ell_{i,j}+\alpha_{i,j}+1)}.
\end{equation*}
with \(D_{i,0}:=-\sum_{j=1}^{n_{i}} D_{i,j}\). 

The cohomology-valued series \eqref{eq:coh-valued-series} 
was introduced by Hosono et~al.~in
\cite{1996-Hosono-Lian-Yau-gkz-generalized-hypergeometric-systems-in-mirror-symmetry-of-calabi-yau-hypersurfaces} 
which encodes the information from the \(A\) model and the \(B\) model for 
a Calabi--Yau mirror pair. The series is 
also called a Givental's \(I\)-function 
in the literature.
We have the following theorem.
\begin{theorem}
\label{prop:coh-ser-solu-gkz-intro}
The pairings \(\langle B^{\alpha}_{X}(x),h\rangle\)
give a complete set of solution to the GKZ \(A\)-hypergeometric system associated 
with \(\mathcal{Y}^{\vee}\to U\)
when \(h\in\mathrm{H}^{\bullet}(X,\mathbb{C})^{\vee}\)
runs through a basis of \(\mathrm{H}^{\bullet}(X,\mathbb{C})^{\vee}\).
\end{theorem}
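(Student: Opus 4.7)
The plan is to adapt the generalized Frobenius method of Hosono--Lian--Yau to the present fractional exponent. The argument has three ingredients: (i) formally verify that $B_X^{\alpha}(x)$, viewed as a series with coefficients in $H^{\bullet}(X,\mathbb{C})$, is annihilated by every operator of the GKZ $A$-hypergeometric system associated with $(A,\beta)$; (ii) pair against a basis of $H^{\bullet}(X,\mathbb{C})^{\vee}$ to extract honest multivalued solutions near the maximal degeneracy point supplied by Theorem \ref{thm:main-theorem-1}; and (iii) match the number of resulting solutions to the holonomic rank of the system.

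For the Euler operators $Z_{k}=\sum_{(i,j)}a_{k,(i,j)}\,x_{i,j}\partial_{i,j}-\beta_{k}$, differentiating the exponential factor in $B_{X}^{\alpha}$ yields $D_{i,j}$ while differentiating $x^{\ell+\alpha}$ yields $\ell_{i,j}+\alpha_{i,j}$. Since $\ell\in\ker A$ and $A\alpha=\beta$ the scalar shifts collapse to $\beta_{k}$, leaving the $H^{\bullet}(X,\mathbb{C})$-valued identity $A(D)=0$: the first $n$ rows express the toric linear equivalences $\sum_{\rho}\langle m_{k},\rho\rangle D_{\rho}=0$, and the last $r$ rows are the tautology $\sum_{j=0}^{n_{i}}D_{i,j}=0$ built into the definition $D_{i,0}=-\sum_{j\ge 1}D_{i,j}$. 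For a box operator $\Box_{\ell'}$ with $\ell'\in L$, the required annihilation follows from the standard Gamma-function shift identity for the coefficients $\mathcal{O}^{\alpha}_{\ell}$ combined with the formal substitution $x_{i,j}\partial_{i,j}\mapsto D_{i,j}+\ell_{i,j}+\alpha_{i,j}$ contributed by the exponential prefactor. The fractional value $\alpha_{i,0}=-1/2$ keeps every Gamma argument away from the non-positive integers, so the formal manipulation is unobstructed.

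Because the augmentation ideal of $H^{\bullet}(X,\mathbb{C})$ is nilpotent, the exponential factor in $B_{X}^{\alpha}$ is in fact a polynomial in the logarithms of degree at most $n$, so each pairing $\langle B_{X}^{\alpha}(x),h\rangle$ defines a genuine multivalued analytic function near the degeneracy point, with monodromy governed by the nilpotent action of the $D_{i,j}$. The real work is step (iii): identifying $\dim_{\mathbb{C}} H^{\bullet}(X,\mathbb{C})$ with the holonomic rank of the system. By the Gelfand--Kapranov--Zelevinsky rank theorem (in the sharpened form valid whenever $\mathrm{Conv}(A)$ admits a unimodular triangulation, which is the case here by the running hypothesis), this rank equals the normalized volume $\mathrm{vol}_{n+r}(\mathrm{Conv}(A))$; a standard count using a compatible simplicial refinement of $\Sigma$ identifies this volume with the number of top-dimensional cones on $X$, which equals $\chi(X)=\dim_{\mathbb{C}} H^{\bullet}(X,\mathbb{C})$. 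Linear independence of the pairings is then immediate by filtering by the degree of the log-polynomial prefactor: distinct basis vectors of $H^{\bullet}(X,\mathbb{C})^{\vee}$ detect distinct leading log-terms, so no nontrivial linear combination can vanish.
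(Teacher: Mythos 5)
Your outline is essentially the intended argument: the survey states this theorem without proof (deferring to the cited work of Lee--Lian--Yau, whose method is exactly the generalized Frobenius method you describe), namely formal annihilation of \(B_X^{\alpha}\) via the divisor relations encoded by the rows of \(A\) together with the Gamma-shift identity for the \(\mathcal{O}^{\alpha}_{\ell}\), followed by the count identifying the normalized volume of \(\mathrm{Conv}(A)\) with \(\chi(X)=\dim\mathrm{H}^{\bullet}(X,\mathbb{C})\) through the Cayley-trick correspondence between maximal cones of \(\Sigma\) and simplices of a unimodular triangulation of \(\mathrm{Conv}(A)\). The only two points you compress, and which deserve explicit mention, are (a) the box-operator verification, which requires the Stanley--Reisner-type vanishing of \(\mathcal{O}^{\alpha}_{\ell}\) for \(\ell\in L\setminus\overline{\mathrm{NE}}(X)\) (coming from the reciprocal Gamma factors in the denominator, whose arguments are genuine nonpositive integers shifted by nilpotents) in order to reconcile the summation over \(\overline{\mathrm{NE}}(X)\cap L\) with the shifts by \(\ell'\in L\), and (b) the fact that the holonomic rank equals the volume at the \emph{non-generic} fractional \(\beta\), which is not automatic from genericity but follows from Cohen--Macaulayness of the toric ideal, itself a consequence of the regular unimodular triangulation supplied by the running smoothness hypothesis.
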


\begin{instance}
[Example \ref{ex:p3-top} continued]
Let us keep the notation.
We describe the GKZ \(A\)-hypergeometric system
for the singular family \(\mathcal{Y}^{\vee}\to U\).
Note that 
\begin{equation*}
\mathrm{H}^{0}(X^{\vee},-K_{X^{\vee}})\cong \bigoplus_{\rho\in \nabla\cap N}
\mathbb{C}\cdot t^{\rho}.
\end{equation*}
The period integrals of \(\mathcal{Y}^{\vee}\to U\)
are governed by the GKZ \(A\)-hypergeometric system with
\begin{equation*}
A = 
\begin{bmatrix}
1 & 1 & 1 & 1 & 1\\
0 & 1 & 0 & 0 &-1\\
0 & 0 & 1 & 0 &-1\\
0 & 0 & 0 & 1 &-1
\end{bmatrix}~\mbox{and}~
\beta=
\begin{bmatrix}
-1/2\\
0\\
0\\
0
\end{bmatrix}.
\end{equation*}
Regarding \(A\) as a linear map \(A\colon
\mathbb{Z}^{5}\to\mathbb{Z}^{4}\) as before, 
we may take
\begin{equation*}
\alpha=
\begin{bmatrix}
-1/2\\
0\\
0\\
0\\
0
\end{bmatrix}
\end{equation*}
so that \(A(\alpha)=\beta\). We can compute
the lattice relation of \(A\)
\begin{equation*}
\mathrm{ker}(A)=\mathbb{Z}\langle(-4,1,1,1,1)
\rangle.
\end{equation*}
We also note that \(\mathrm{vol}_{4}(A)=4\).
Denote by \(x_{0},x_{1},\ldots,x_{4}\) the
variable attached to the columns of \(A\) in the GKZ system.
We can write down the unique holomorphic series solution
\begin{equation*}
\sum_{n\ge 0}\frac{\Gamma(1/2+4n)}{\Gamma(1/2)\Gamma(n+1)^{4}}
\left(\frac{x_{1}x_{2}x_{3}x_{4}}{x_{0}^{4}}\right)^{n}.
\end{equation*}

In the present case, we have \(r=j=1\). 
For simplicity, we will drop the index \(i\) 
in the above formula and 
write \(D_{j}\equiv D_{1,j}\).
The cohomology-valued series 
\eqref{eq:coh-valued-series} becomes
\begin{equation}
\label{eq:coh-series-ex-p3}
B_{X}^{\alpha}(x)=\left(\sum_{n\ge 0} 
\mathcal{O}_{n}^{\alpha} \left(\frac{x_{1}x_{2}x_{3}x_{4}}{x_{0}^{4}}
\right)^{n}\frac{1}{x_{0}^{1/2}}\right)
\exp\left(\sum_{j=0}^{4}(\log x_{j}) D_{j}\right)
\end{equation}
where
\begin{equation*}
\mathcal{O}^{\alpha}_{n}
=\frac{\Gamma(-D_{0}+4n+1/2)}{\Gamma(1/2)
\prod_{j=1}^{4}\Gamma(D_{j}+n+1)}.
\end{equation*}
Denote by \(H\) the hyperplane class of \(\mathbf{P}^{3}\).
We have \(D_{1}=D_{2}=D_{3}=D_{4}=H\) and \(D_{0}=-4H\).
We may re-write \eqref{eq:coh-series-ex-p3} into
\begin{equation}
\label{eq:coh-series-1}
B_{X}^{\alpha}(x)=\frac{1}{x_{0}^{1/2}}\left(\sum_{n\ge 0} 
\mathcal{O}_{n}^{\alpha} 
\left(\frac{x_{1}x_{2}x_{3}x_{4}}{x_{0}^{4}}\right)^{n}\right)
\exp\left(\log 
\left(\frac{x_{1}x_{2}x_{3}x_{4}}{x_{0}^{4}}\right)\cdot H\right)
\end{equation}
with 
\begin{equation*}
\mathcal{O}^{\alpha}_{n}
=\frac{\Gamma(4H+4n+1/2)}{\Gamma(1/2)
\Gamma(H+n+1)^{4}}.
\end{equation*}
If we expand the series \eqref{eq:coh-series-1} 
according to the cohomology basis 
\(\{\mathbf{1},H,H^{2},H^{3}\}\), then
the coefficients give a complete set of the
solutions to the relevant GKZ \(A\)-hypergeometric
system.

\end{instance}

\subsection{The quantum test}
In this subsection, we will 
consider the double cover over \(\mathbf{P}^{3}\)
branched along four hyperplanes and one quartic
(see Example \ref{ex:p3})
and carry out the quantum test in this case.
Let us keep the notation in Example \ref{ex:p3}
and Example \ref{ex:p3-top}.

\subsubsection{Crepant resolutions} 
According to the results in 
\cite{2013-Sheng-Xu-Zuo-maximal-families-of-calabi-yau-manifolds-with-minimal-length-length-yukawa-coupling}, there exists a
family of crepant
resolutions \(\tilde{\mathcal{Y}}^{\vee}\to \mathcal{Y}^{\vee}\to U\)
without modifying \(h^{p,q}\) of each fiber for \(p\ne q\).
Fix a reference fiber \(Y^{\vee}\)
of \(\mathcal{Y}^{\vee}\to U\) as before 
and denote by \(\tilde{Y}^{\vee}\to Y^{\vee}\) 
the crepant resolution.
One can easily see that 
\(\tilde{Y}^{\vee}\) is smooth Calabi--Yau
with \(h^{2,1}(\tilde{Y}^{\vee})=h^{2,1}(Y^{\vee})=1\). 
Moreover, we have \(\mathrm{H}^{3}(\tilde{Y}^{\vee},\mathbb{Q})
\cong\mathrm{H}^{3}(Y^{\vee},\mathbb{Q})\).

\subsubsection{Picard--Fuchs equations}
Recall that the GKZ \(A\)-hypergeometric system
for the family \(\mathcal{Y}^{\vee}\to U\)
is given by the data
\begin{equation*}
A = 
\begin{bmatrix}
1 & 1 & 1 & 1 & 1\\
0 & 1 & 0 & 0 &-1\\
0 & 0 & 1 & 0 &-1\\
0 & 0 & 0 & 1 &-1
\end{bmatrix}~\mbox{and}~
\beta=
\begin{bmatrix}
-1/2\\
0\\
0\\
0
\end{bmatrix}.
\end{equation*}
Let us carry out the Picard--Fuchs equations
on the one-dimensional moduli.
Set \(\ell := (-4,1,1,1,1)\in\mathrm{ker}(A)\). It is a generator 
of \(\mathrm{ker}(A)\)
and define
\begin{equation*}
z = x^{\ell} := \frac{x_{1}x_{2}x_{3}x_{4}}{x_{0}^{4}}.
\end{equation*}
Here \(z\) can be regarded as the coordinate on the quotient 
\((\mathbb{C}^{\ast})^{5}\slash \mathrm{ker}(A)\otimes\mathbb{C}^{\ast}
\cong\mathbb{C}^{\ast}\).
Consider the box operator
\begin{equation*}
\Box_{\ell} = \partial_{1}\partial_{2}
\partial_{3}\partial_{4}-\partial_{0}^{4},~\mbox{where}~
\partial_{j}:=\frac{\partial}{\partial x_{j}}.
\end{equation*}
To carry out the Picard--Fuchs equation,
we consider the conjugate operator
\begin{align}
x^{-\alpha}x_{1}x_{2}x_{3}x_{4}\Box_{\ell}x^{\alpha}&=
x_{0}^{1/2}x_{1}x_{2}x_{3}x_{4}\Box_{\ell}x_{0}^{-1/2}\notag\\
&=x_{1}x_{2}x_{3}x_{4}\partial_{1}\partial_{2}
\partial_{3}\partial_{4}-zx_{0}^{1/2}x_{0}^{4}
\partial_{0}^{4}x_{0}^{-1/2}\notag\\
&=\theta_{1}\theta_{2}\theta_{3}\theta_{4}- z
\left(\theta_{0}-\frac{7}{2}\right)
\left(\theta_{0}-\frac{5}{2}\right)
\left(\theta_{0}-\frac{3}{2}\right)
\left(\theta_{0}-\frac{1}{2}\right)\notag\\
&=\theta_{z}^{4} - 256z
\left(\theta_{z}+\frac{7}{8}\right)
\left(\theta_{z}+\frac{5}{8}\right)
\left(\theta_{z}+\frac{3}{8}\right)
\left(\theta_{z}+\frac{1}{8}\right)\label{eq:pf-eq}.
\end{align}
Here \(\theta_{i}=x_{i}\partial_{i}\) and \(\theta_{z}=z\partial_{z}\).

\begin{remark}
We note that the equation \eqref{eq:pf-eq} is the same as
the Picard--Fuchs equation for 
the mirror of a smooth degree \(8\)
hypersurface in the 
weighted projective space \(\mathbf{P}(1,1,1,1,4)\).
\end{remark}

\begin{remark}
From Batyrev's duality construction, 
\(X^{\vee}\) is a resolution of the toric
variety \(\mathbf{P}_{\nabla}\cong \mathbf{P}^{3}\slash G\)
for some finite abelian group \(G\). Indeed,
\begin{equation*}
\nabla = \mathrm{Conv}\{(1,0,0),(0,1,0),(0,0,1),(-1,-1,-1)\}.
\end{equation*}
The normal fan of \(\nabla\) is the face fan of
its dual 
\begin{equation*}
\nabla^{\vee}=\Delta=
\mathrm{Conv}\{(3,-1,-1),(-1,3,-1),(-1,-1,3),(-1,-1,-1)\}.
\end{equation*}
The group \(G\) can be identified with the quotient group
\(\mathbb{Z}^{3}\slash N_{0}\) where
\(N_{0}\) is the sublattice generated by 
\((3,-1,-1),(-1,3,-1),(-1,-1,3),(-1,-1,-1)\).

The singular mirror \(Y^{\vee}\) is a double cover
branched along the union of toric divisors on \(X^{\vee}\)
together with a smooth anti-canonical section.
The universal family for \(\tilde{Y}^{\vee}\) 
can be realized as a partial resolution of
the double cover over \(\mathbf{P}_{\nabla}\)
branched along the union of toric divisors and
\begin{equation*}
\{z_{1}^{4}+z_{2}^{4}+z_{3}^{4}+z_{4}^{4}+x_{0}\cdot 
z_{1}z_{2}z_{3}z_{4}=0\}
\slash G,~~\mbox{where}~
[z_{1}\mathpunct{:}z_{2}\mathpunct{:}z_{3}\mathpunct{:}z_{4}]\in
\mathbf{P}^{3}.
\end{equation*}
The variable \(z:=x_{0}^{-4}\) is the coordinate on
the moduli space of \(\tilde{Y}^{\vee}\) (or \(Y^{\vee}\)).
\end{remark}

\subsubsection{The instanton prediction}
Mimicking the classical case, we can compute
the unique holomorphic period around 
\(z=0\)
\begin{equation}
\label{eq:hol-series-sol}
\omega_{0}(z):=\sum_{n\ge 0} 
\frac{\Gamma(4n+1/2)}{\Gamma(1/2)\Gamma(n+1)^{4}}
z^{n}.
\end{equation}
Denote by \(\Omega(z)\) a relative holomorphic
top form on the moduli space of \(\tilde{Y}^{\vee}\).
Let us compute the Yukawa coupling
\begin{equation}
\label{equation:yukawa-coupling}
\left\langle \theta_{z},\theta_{z},\theta_{z}\right\rangle^{\Omega}
:=\int_{\tilde{Y}^{\vee}} \Omega(z)\wedge \theta_{z}^{3}\Omega(z).
\end{equation}
By Griffiths transversality, 
\begin{equation*}
\int_{\tilde{Y}^{\vee}} \Omega(z)\wedge \theta_{z}^{2}\Omega_{z}=0.
\end{equation*}
Differentiating the displayed equation twice yields
\begin{equation*}
\int_{\tilde{Y}^{\vee}} \theta_{z}\Omega(z)\wedge \theta_{z}^{3}\Omega(z)+
\theta_{z}\left\langle \theta_{z},\theta_{z},\theta_{z}\right
\rangle^{\Omega}=0.
\end{equation*}
By chain rule, we then have
\begin{equation*}
\theta_{z}\left(\int_{\tilde{Y}^{\vee}} \Omega(z)\wedge 
\theta_{z}^{3}\Omega(z)\right)
-\int_{\tilde{Y}^{\vee}} \Omega(z)\wedge \theta_{z}^{4}\Omega(z)
+\theta_{z}\left\langle \theta_{z},\theta_{z},
\theta_{z}\right\rangle^{\Omega}=0;
\end{equation*}
in other words,
\begin{equation*}
2\theta_{z}\left\langle \theta_{z},\theta_{z},\theta_{z}\right\rangle^{\Omega}
-\int_{\tilde{Y}^{\vee}} \Omega(z)\wedge \theta_{z}^{4}\Omega(z)=0.
\end{equation*}
Manipulating the Picard--Fuchs equation
\eqref{eq:pf-eq},
we obtain
\begin{equation*}
\label{equation:differential-equation-for-yukawa-couling}
\theta_{z}\left\langle \theta_{z},\theta_{z},\theta_{z}\right\rangle^{\Omega}
=\frac{z}{1-z}\left\langle \theta_{z},\theta_{z},\theta_{z}
\right\rangle^{\Omega}.
\end{equation*}
We can solve the above equation and get
\begin{equation*}
\left\langle \theta_{z},\theta_{z},\theta_{z}
\right\rangle^{\Omega} = \frac{C}{1-z}~\hspace{0.1in}
\mbox{for some constant \(C\)}.
\end{equation*}
One can check 
the \emph{normalized Yukawa coupling}
\begin{equation*}
\left\langle \theta_{z},\theta_{z},\theta_{z}\right\rangle
:=\int_{\tilde{Y}^{\vee}} \frac{\Omega(z)}{\omega_{0}(z)}\wedge \theta_{z}^{3}
\left(\frac{\Omega(z)}{\omega_{0}(z)}\right)
\end{equation*}
is given by
\begin{equation}
\label{equation:normal-yukawa-coupling}
\left\langle \theta_{z},\theta_{z},\theta_{z}\right\rangle = 
\frac{C}{(1-z)\omega_{0}(z)^{2}},
\end{equation}
where \(\omega_{0}(z)\) is the holomorphic series solution 
\eqref{eq:hol-series-sol}.
Consider the deformed series
\begin{equation*}
\label{eq:gamma-hol-series-deformed}
\omega_0(z;\rho):=\sum_{n\ge 0} 
\frac{\Gamma(4n+4\rho+1/2)}{\Gamma(1/2)\Gamma(n+\rho+1)^{4}} z^{n+\rho} 
\end{equation*}
and its derivative with respect to \(\rho\)
\begin{equation*}
\omega_{1}(z):=\left.
\frac{\mathrm{d}}{\mathrm{d}\rho}\right|_{\rho=0} \omega_{0}(z;\rho).
\end{equation*}
Recall that the mirror map is given by
\begin{equation}
\label{equation:mirrir-map-4h}
q = \exp\left(2\pi\sqrt{-1} t\right),~t = 
\frac{1}{2\pi\sqrt{-1}}\frac{\omega_{1}(z)}{\omega_{0}(z)}.
\end{equation}
Using the classical product, one finds \( C = 2 \) in 
\eqref{equation:normal-yukawa-coupling} and
the ``mirror map'' is
\begin{equation*}
q = \frac{z}{256} + \frac{247z^{2}}{1024} + 
\frac{13368541z^{3}}{524288}+\cdots
\end{equation*}
whose inverse is given by 
\begin{equation*}
z = 256 q - 4046848 q^2 + 18282602496 q^3 + \cdots.
\end{equation*}
The (expected) \(A\) model correlation function is
\begin{align}
\begin{split}
\label{eq:a-model-predicted-correlations}
&\langle H,H,H\rangle(q) \\
&= 2 + 29504 q + 1030708800 q^2 + 38440454795264 q^3 + \cdots.
\end{split}
\end{align}

\subsubsection{An instanton calculation}
To complete the quantum test, we will compute the
oribifold Gromov--Witten invariants of \(Y\)
and compare them with \eqref{eq:a-model-predicted-correlations}.

Let \([z_1\mathpunct{:}\ldots\mathpunct{:}z_4]\) be the homogeneous 
coordinates on \(X=\mathbf{P}^{3}\) as before and
\(f\) be a degree \(4\) polynomial in \(\mathbf{P}^{4}\)
such that \(\{f=0\}\cup\bigcup_{i=1}^{4}\{z_{i}=0\}\) 
is the branch locus of the double cover \(Y\to X\).
Consider the graph map
\begin{equation*}
\Gamma_{f}\colon X\to\mathbf{P}(1,1,1,1,4),~
[z_1\mathpunct{:}\ldots\mathpunct{:}z_4]\mapsto
[z_1\mathpunct{:}\ldots\mathpunct{:}z_4\mathpunct{:}f(z)].
\end{equation*}
This is well-defined since \(f\) is of degree \(4\) and
the branch divisor is SNC.
Obviously, \(\Gamma_{f}\) defines an embedding
\(X\hookrightarrow\mathbf{P}(1,1,1,1,4)\).

Let \([y_1\mathpunct{:}\ldots\mathpunct{:}y_5]\)
be the homogeneous coordinate on \(\mathbf{P}(1,1,1,1,4)\).
Consider the covering map
\begin{equation}
\label{eq:Phi}
\Phi\colon\mathbf{P}(1,1,1,1,4)\to\mathbf{P}(1,1,1,1,4),~
[y_1\mathpunct{:}\ldots\mathpunct{:}y_5]
\mapsto [y_1^2\mathpunct{:}\ldots\mathpunct{:}y_5^2].
\end{equation}
Let \(Y'\subset\mathbf{P}(1,1,1,1,4)\)
be the subvariety defined by the degree \(8\) polynomial
\(y_{5}^{2}-f(y_{1}^{2},\ldots,y_{4}^{2})\).
It is clear that \(Y'\) is a smooth 
Calabi--Yau hypersurface. 

Look at the diagram
\begin{equation}
\begin{tikzcd}
  &  &\mathbf{P}(1,1,1,1,4)\ar[d,"\Phi"]\\
  & X \ar[r,"\Gamma_{f}"] &\mathbf{P}(1,1,1,1,4)
\end{tikzcd}
\end{equation}
Taking the fibred product, we obtain a cover
\(Y'\to X\) branched along 
\begin{equation*}
\{f=0\}\cup\bigcup_{i=1}^{4}\{z_{i}=0\}.
\end{equation*}
Let \(\mu_{2}=\{-1,1\}\subset\mathbb{C}^{\ast}\).
We define an action of \(\mu_{2}^{5}\) on \(\mathbf{P}(1,1,1,1,4)\) 
\begin{equation*}
g\cdot [y_1\mathpunct{:}\ldots\mathpunct{:}y_{5}]:=
\left[{g_1}\cdot y_1\mathpunct{:}\ldots\mathpunct{:}g_{5}\cdot y_{5}\right]~
\mbox{where}~g=(g_{1},\ldots,g_{5})\in\mu_{2}^{5}.
\end{equation*}
Notice that the subgroup \(K:=\langle(-1,-1,-1,-1,1)\rangle\)
acts trivially on \(\mathbf{P}(1,1,1,1,4)\). 
Then \(G=\mu_{2}^{5}\slash K\) 
is the Galois group of the
cover $Y'\to X$.
The map 
\begin{equation*}
\textstyle\mu_{2}^{5}\to \mu_{2},~
(g_{1},\ldots,g_{5})\mapsto \prod_{i=1}^{5} g_i
\end{equation*} 
induces a map \(G\to \mu_{2}\). Let \(G'\) be its kernel.
Explicitly,
\begin{equation*}
\textstyle G' = \left\{(g_{1},\ldots,g_{5})\in \mu_{2}^{5}
~\Big|~\prod_{i=1}^{5} g_{i}=1\right\}
\Big\slash K.
\end{equation*}
One can prove the following lemma.
\begin{lemma}
The map
\begin{equation*}
\mathbf{P}(1,1,1,1,4)\slash G'\to \mathbf{P}(1,1,1,1,4)\slash G
\cong \mathbf{P}(1,1,1,1,4)
\end{equation*}
is a double cover
branched along the union of all toric divisors.
\end{lemma}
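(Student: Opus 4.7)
The plan is to decompose the statement into two independent pieces: that the map is a degree-$2$ Galois cover, and that its branch locus equals the union of toric divisors. For the first, I would observe that the product-of-signs homomorphism $\mu_2^5 \to \mu_2$, $(g_1,\ldots,g_5) \mapsto \prod_i g_i$, factors through $G = \mu_2^5/K$ (since $K$ has sign-product $+1$) and is visibly surjective with kernel $G'$, so $G/G' \cong \mu_2$ and the quotient morphism $\mathbf{P}(1,1,1,1,4)/G' \to \mathbf{P}(1,1,1,1,4)/G$ is automatically a degree-$2$ Galois cover with deck group $\mu_2$.

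The main work is to identify the branch locus. Since the cover is a further quotient by $G/G' \cong \mu_2$, its ramification pulls back to
\[
R := \bigl\{[y]\in\mathbf{P}(1,1,1,1,4) \,:\, \mathrm{Stab}_G([y])\not\subset G'\bigr\},
\]
equivalently the locus of points fixed by some $(g_1,\ldots,g_5)\in\mu_2^5$ with $\prod_i g_i = -1$. I would compute $R$ by a direct case analysis using that $(g_1,\ldots,g_5)$ fixes $[y_1\mathpunct{:}\cdots\mathpunct{:}y_5]$ iff there exists $\lambda \in \mathbb{C}^*$ with $g_i y_i = \lambda y_i$ for $i\le 4$ and $g_5 y_5 = \lambda^4 y_5$. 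Over the open torus $\{y_1\cdots y_5 \neq 0\}$, this pins down $\lambda = g_1 = \cdots = g_4 \in \mu_2$ and $g_5 = \lambda^4 = 1$, so the stabilizer is exactly $K$, which lies in $G'$. On the other hand, for a point with $y_j = 0$ and $j\le 4$ the tuple whose only nontrivial entry is $g_j = -1$ fixes the point and has sign-product $-1$; for a point with $y_5 = 0$, the tuple $(1,1,1,1,-1)$ does the same. This gives $R = \bigcup_{i=1}^5 \{y_i = 0\}$.

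To conclude, I would push $R$ forward through the covering $\Phi$ of \eqref{eq:Phi} identifying $\mathbf{P}(1,1,1,1,4)/G$ with $\mathbf{P}(1,1,1,1,4)$: since $\Phi$ sends $\{y_i = 0\}$ to the coordinate hyperplane $\{x_i = 0\}$, the image of $R$ is precisely the union of all five toric divisors of the target, which together with the degree-$2$ statement yields the lemma. The single place that needs care is the asymmetric weight $4$ on the fifth coordinate: it is what forces $g_5 = \lambda^4 = 1$ in the generic-stabilizer computation (thereby excluding the open torus from $R$) and simultaneously makes $(1,1,1,1,-1)$ available to ramify along $\{y_5 = 0\}$, so I would double-check the weighted-projective arithmetic there before trusting the outcome.
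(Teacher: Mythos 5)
Your argument is correct: the index-two containment $G'\subset G$ gives the degree-$2$ Galois quotient, and the stabilizer analysis (a $G$-orbit fails to split into two $G'$-orbits exactly when its stabilizer contains an element of sign-product $-1$) correctly identifies the branch locus as the image of $\bigcup_{i=1}^5\{y_i=0\}$, i.e.\ the union of the toric divisors; the weighted-projective subtlety you flag, namely that $\lambda=-1$ forces $g_5=\lambda^4=1$ on the open torus so that the generic stabilizer is exactly $K$, is handled correctly. The paper states this lemma without proof (``One can prove the following lemma''), and your computation is precisely the standard argument it implicitly relies on, so there is nothing to compare beyond noting that you have supplied the omitted details.
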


\begin{corollary}
We have \(Y\simeq Y'\slash G'\). 
\end{corollary}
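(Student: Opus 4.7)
The plan is to exhibit a $G'$-equivariant morphism $\phi\colon Y'\to Y$ that descends to an isomorphism $Y'/G'\xrightarrow{\sim} Y$. To write $\phi$ down, I first fix a concrete model for $Y$. Since the branch divisor of $Y\to X=\mathbf{P}^{3}$ is $\{f=0\}\cup\bigcup_{i=1}^{4}\{z_{i}=0\}$, of total degree $8 = 2\cdot 4$, the double cover may be realized as the hypersurface $\{u^{2}=z_{1}z_{2}z_{3}z_{4}\,f(z)\}\subset \mathbf{P}(1,1,1,1,4)$ with weighted coordinates $[z_{1}\mathpunct{:}\cdots\mathpunct{:}z_{4}\mathpunct{:}u]$. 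Using this model, I set
\[
\phi\colon Y'\longrightarrow Y,\qquad [y_{1}\mathpunct{:}\cdots\mathpunct{:}y_{5}]\longmapsto [y_{1}^{2}\mathpunct{:}y_{2}^{2}\mathpunct{:}y_{3}^{2}\mathpunct{:}y_{4}^{2}\mathpunct{:}y_{1}y_{2}y_{3}y_{4}y_{5}].
\]
Well-definedness between the two weighted projective spaces is a weight check: $y_{i}^{2}$ has weight $2$ and $y_{1}y_{2}y_{3}y_{4}y_{5}$ has weight $8 = 4\cdot 2$, matching the target weights $(1,1,1,1,4)$. The image lands on $Y$ because the defining equation of $Y'$ gives $(y_{1}y_{2}y_{3}y_{4}y_{5})^{2} = y_{1}^{2}y_{2}^{2}y_{3}^{2}y_{4}^{2}\cdot f(y_{1}^{2},\ldots,y_{4}^{2})$.

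Next I verify the group action. An element $g=(g_{1},\ldots,g_{5})\in\mu_{2}^{5}$ sends $\phi([y])$ to $[y_{1}^{2}\mathpunct{:}\cdots\mathpunct{:}y_{4}^{2}\mathpunct{:}(\prod_{i}g_{i})\,y_{1}y_{2}y_{3}y_{4}y_{5}]$, which agrees with $\phi([y])$ precisely when $\prod_{i}g_{i}=1$, i.e.\ when $g$ represents a class in $G'$. Thus $\phi$ is $G'$-invariant (and the residual action of $G/G'\cong\mu_{2}$ on the image is the deck involution of $Y\to X$), so $\phi$ factors as $\bar\phi\colon Y'/G'\to Y$.

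To upgrade $\bar\phi$ to an isomorphism, I check bijectivity on closed points. For a general $[z\mathpunct{:}u]\in Y$, choosing $y_{i} := \sqrt{z_{i}}$ for $i\le 4$ and setting $y_{5} := u/(y_{1}y_{2}y_{3}y_{4})$ yields $y_{5}^{2} = u^{2}/(z_{1}z_{2}z_{3}z_{4}) = f(z)$, so $[y]\in Y'$ and $\phi([y])=[z\mathpunct{:}u]$; this gives surjectivity. The sixteen sign choices $(\pm,\pm,\pm,\pm)$ for $(y_{1},\ldots,y_{4})$ then reduce, via the overall scaling $\lambda = -1$ on the source, to eight orbits, matching $|G'| = 8$. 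Hence $\bar\phi$ is a finite morphism of degree one between normal varieties (each side is a double cover of the smooth $X$ branched along an SNC divisor), and Zariski's main theorem concludes.

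The delicate step I expect to require the most care is matching the degree-eight fibre of $\phi$ with a single $G'$-orbit — once the cardinalities agree and $G'$-invariance is established, the identification is automatic. As a short alternative, I could bypass the coordinate calculation entirely by applying the preceding lemma to the $G$-stable subvariety $Y'$ (the defining equation involves only squared variables): the quotient $Y'/G'\to Y'/G$ is then a double cover, and identifying $Y'/G\cong\Phi(Y') = \Gamma_{f}(X)\cong X$ pulls the toric branch divisors $\{z_{5}=0\}$ and $\{z_{i}=0\}$ $(i\le 4)$ back via $\Gamma_{f}^{-1}$ to $\{f=0\}$ and $\{z_{i}=0\}$ respectively; since $\mathrm{Pic}(\mathbf{P}^{3}) = \mathbb{Z}$ has no $2$-torsion, a double cover of $\mathbf{P}^{3}$ is determined by its branch divisor, giving $Y'/G'\simeq Y$.
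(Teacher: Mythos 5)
Your argument is correct, and your main route is genuinely different from the paper's: the paper's entire proof is your ``short alternative'' at the end, namely that $Y$ and $Y'/G'$ are both double covers of $\mathbf{P}^{3}$ with the same branch locus, and since $\mathrm{Pic}(\mathbf{P}^{3})=\mathbb{Z}$ is torsion free the square root of $\mathscr{O}(8)$ is unique, so the two covers coincide. Your primary argument instead exhibits the quotient map explicitly in weighted homogeneous coordinates, $[y]\mapsto[y_{1}^{2}\mathpunct{:}\cdots\mathpunct{:}y_{4}^{2}\mathpunct{:}y_{1}y_{2}y_{3}y_{4}y_{5}]$, checks $G'$-invariance, counts the generic fibre ($16$ sign choices modulo the $\lambda=-1$ rescaling giving $8=|G'|$ points), and invokes Zariski's main theorem; the computation is right, including the weight bookkeeping and the observation that the residual $G/G'$ acts as the deck involution. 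Two remarks on the comparison. First, your explicit route does not entirely dispense with the Picard-group input: identifying the abstractly constructed double cover $Y$ with the model $\{u^{2}=z_{1}z_{2}z_{3}z_{4}f(z)\}\subset\mathbf{P}(1,1,1,1,4)$ already uses the uniqueness of the square-root line bundle $\mathscr{O}(4)$, which is the same torsion-freeness fact the paper uses, so the gain is not in logical economy. Second, what your computation does buy is an explicit description of the covering map $Y'\to Y$ and of how the $G'$-orbits sit over points of $Y$, which is exactly the kind of concrete control that makes the subsequent comparison of $I$-functions and Poincar\'e pairings (the factor $\int_{Y}H^{3}=\tfrac{1}{8}\int_{Y'}(2h)^{3}$) transparent; the paper's one-line proof is cleaner but leaves that structure implicit.
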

\begin{proof}
Both $Y$ and $Y'\slash G'$ are double covers
over $\mathbf{P}^3$ having the same 
branch locus. Since the Picard group of $\mathbf{P}^3$
is torsion free, $Y$ and $Y'\slash G'$ must be isomorphic.
\end{proof}

We can regard \(Y'\slash G'\) as
a Calabi--Yau hypersurface in \(\mathbf{P}(1,1,1,1,4)\slash G'\).
The orbifold Gromov--Witten invariants of \(Y\cong Y'\slash G'\) 
can be computed by applying the 
orbifold quantum hyperplane section
theorem 
\cite{2010-Tseng-orbifold-quantum-riemann-roch-
lefschetz-and-serre}*{Theorem 5.2.3}.
We will prove the following theorem.
\begin{theorem}
\label{thm:main-theorem-4h}
The equation \eqref{eq:a-model-predicted-correlations}
is the generating series of the 
untwisted genus zero orbifold Gromov--Witten invariants of \(Y\) 
with all insertions \(H\), where \(H\) is the pullback of 
the hyperplane class
of \(X\). 
\end{theorem}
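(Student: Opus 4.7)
The plan is to exploit the isomorphism $Y\cong Y'\slash G'$ established above and realize the untwisted orbifold Gromov--Witten theory of $Y$ as a twisted Gromov--Witten theory of the toric Deligne--Mumford stack $W:=\mathbf{P}(1,1,1,1,4)\slash G'$, into which $Y'\slash G'$ embeds as a Calabi--Yau anticanonical hypersurface. First, I would set up the ambient toric stack explicitly: record the fan and stacky data of $W$ (equivalently, read off $G'$ as a subgroup of the Deligne--Mumford torus of $\mathbf{P}(1,1,1,1,4)$), identify the hyperplane class $H$ of $X=\mathbf{P}^{3}$ with the restriction to $Y'\slash G'$ of a divisor class on $W$, and note that $-K_{W}$ is represented by the toric section whose vanishing locus is $Y'\slash G'$. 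In particular, $H$ lies in the untwisted sector of the Chen--Ruan cohomology of $Y'\slash G'$, so three-point functions with all insertions $H$ are genuinely untwisted genus zero orbifold invariants.

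The second step is to apply Tseng's orbifold quantum hyperplane section theorem \cite{2010-Tseng-orbifold-quantum-riemann-roch-lefschetz-and-serre}*{Theorem 5.2.3}: the small $J$-function of $Y'\slash G'$ can be obtained from an $S^{1}$-equivariantly twisted $I$-function of $W$ by the hypergeometric modification corresponding to the line bundle $\mathscr{O}_{W}(-K_{W})$, followed by the non-equivariant limit. Because both the insertion $H$ and the K\"{a}hler parameter $t$ pair only with the untwisted Chen--Ruan sector, only the untwisted summand of the twisted ambient $I$-function will contribute to $\langle H,H,H\rangle(q)$. This reduces the computation to identifying that untwisted summand.

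The third step is to match the untwisted contribution with the cohomology-valued series $B_{X}^{\alpha}(x)$ of \eqref{eq:coh-series-1}. The key observation is that the lattice of curve classes contributing to the untwisted sector is exactly the image of the Mori cone of $X=\mathbf{P}^{3}$ inside $\mathrm{H}_{2}(W,\mathbb{Q})$; on this sublattice, the twisted ambient $I$-function reduces term-by-term to \eqref{eq:coh-series-1}, with the Gamma-factors produced by the orbifold Euler twist reproducing exactly the factor $\Gamma(4H+4n+1/2)\slash\Gamma(H+n+1)^{4}$. Once this identification is made, the mirror map \eqref{equation:mirrir-map-4h} is recognized as Givental's orbifold mirror map, and reading off the normalized $J$-function in the flat coordinate $t$ yields the Yukawa coupling \eqref{equation:normal-yukawa-coupling}. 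The series expansion \eqref{eq:a-model-predicted-correlations} then follows by the standard expression of the three-point function $\langle H,H,H\rangle(q)$ in terms of $\langle\theta_{z},\theta_{z},\theta_{z}\rangle$ on a one-dimensional K\"{a}hler moduli.

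The main obstacle will be the third step. One has to verify carefully that, for curve classes pulled back from the Mori cone of $\mathbf{P}^{3}$, only the untwisted inertia components contribute, and that the non-equivariant limit of the twisted ambient $I$-function is regular on this sublattice and coincides with the classical Givental $I$-function of a degree $8$ octic in $\mathbf{P}(1,1,1,1,4)$. This regularity, together with Tseng's theorem and the uniqueness of the normalization of the mirror map, then pins down the constant $C=2$ in \eqref{equation:normal-yukawa-coupling} and matches \eqref{eq:a-model-predicted-correlations} coefficient by coefficient.
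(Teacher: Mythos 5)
Your proposal follows essentially the same route as the paper: realize \(Y\cong Y'/G'\) as an anticanonical hypersurface in the toric stack \([\mathbf{P}(1,1,1,1,4)/G']\), apply Tseng's orbifold quantum Lefschetz theorem via the hypergeometric modification, and restrict to the untwisted sector. The one place where the paper is more concrete than you are is the normalization: rather than appealing to ``uniqueness of the normalization of the mirror map'' to pin down \(C=2\), it observes that the untwisted \(I\)-function is formally identical to that of the smooth octic \(Y'\subset\mathbf{P}(1,1,1,1,4)\) under \(h\mapsto H\) and checks \(\int_{Y}H^{3}=\tfrac{1}{8}\int_{Y'}(2h)^{3}=2\), so that \(h^{k}\mapsto H^{k}\) is an isometry for the (orbifold) Poincar\'e pairings and the two \(J\)-functions, hence the three-point functions, literally coincide.
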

Now let us prove Theorem \ref{thm:main-theorem-4h}.
We have the following commutative diagram:
\begin{equation}
\label{eq:comm-diag-quotient}
\begin{tikzcd}
\mathbf{P}(1,1,1,1,4)\ar[rd,"q"]\ar[dd,"\Phi"] &\\
& \mathbf{P}(1,1,1,1,4)\slash G'\ar[ld,"p"]\\
\mathbf{P}(1,1,1,1,4) &
\end{tikzcd}
\end{equation}
In the above diagram, \(\Phi\)
is defined in \eqref{eq:Phi}, \(q\)
is the quotient map and \(p\)
is the induced double cover.

\begin{remark}
\label{rmk:toric-data}
\(\mathbf{P}(1,1,1,1,4)\slash G'\)
is a toric variety. We can describe its fan structure as follows.
Consider a rank four lattice \(\overline{N}:=\mathbb{Z}^{4}\)
and integral vectors
\begin{align*}
\rho_{1}&=(1,1,-1,1),\\
\rho_{2}&=(-1,1,-1,1),\\
\rho_{3}&=(1,-1,1,1),\\
\rho_{4}&=(-1,1,1,-1),\\
\rho_{5}&=(3,-5,-3,1).
\end{align*}
Note that \(\rho_{1}+\rho_{2}+\rho_{3}+4\rho_{4}+\rho_{5}=0\) and
that \(\rho_{1},\ldots,\rho_{5}\)
generate a sublattice \(\overline{N}'\) of index \(8\) in 
\(\mathbb{Z}^{4}\). Put
\begin{equation*}
\sigma_{j}:=\mathrm{Cone}\{\rho_{1},\cdots,
\hat{\rho}_{j},\cdots,\rho_{5}\},~
j=1,\ldots,5.
\end{equation*}
All the \(\sigma_{j}\) together with all their faces 
form a complete fan \(\Xi\) in \(\overline{N}'\otimes\mathbb{R}\).
With respect to \(\overline{N}'\), the toric variety \(X_{\Xi,\overline{N}'}\)
is isomorphic to \(\mathbf{P}(1,1,1,1,4)\).
Moreover, we can show that
\(G'\cong \overline{N}\slash \overline{N}'\) 
and \(X_{\Xi,\overline{N}}\cong \mathbf{P}(1,1,1,1,4)\slash G'\).

\end{remark}

Let \(\mathfrak{X}=[\mathbf{P}(1,1,1,1,4)\slash G']\)
be the quotient stack.
The coarse moduli space is denoted by \(|\mathfrak{X}|\)
(\(\cong\mathbf{P}(1,1,1,1,4)\slash G'\)).
Let us write down the (non-extended) \(I\)-function for \(\mathfrak{X}\).
Denote by \(D_{i}\) the toric divisor associated 
to the \(1\)-cone \(\mathbb{R}_{\ge}\rho_{i}\)
defined in Remark \ref{rmk:toric-data}.
One can easily prove that \(D_{1}\equiv D_{2}\equiv D_{3}\equiv D_{5}\) and 
\(D_{4}\equiv 4D_{5}\). 
and \(D_{1}.\ell = 1/4\). Write \(H=D_{1}\).
In the present case, \(\mathrm{H}^{2}(|\mathfrak{X}|;\mathbb{C})=
\mathbb{C}\cdot H\)
and \(\mathrm{H}_{2}(|\mathfrak{X}|,\mathbb{Z})=\mathbb{Z}\langle\ell\rangle\)
where \(\ell\) is the curve class coming from a \emph{wall} in \(\Xi\).
Then \(8H\) is a Cartier divisor on \(|\mathfrak{X}|\).
The non-extended \(I\)-function (on
the very small parameter space \(\mathrm{H}^{\le 2}(|\mathfrak{X}|;\mathbb{C})
\subset \mathrm{H}_{\mathrm{CR}}^{\bullet}(\mathfrak{X};\mathbb{C})\)) 
is given by 
\begin{align*}
I_{\mathfrak{X}}(t;z)&=
z\cdot \exp({\textstyle H t\slash z})\sum_{g\in \mathrm{C}(G')}
\sum_{d\in\overline{\mathrm{NE}}_{g}}q^{d}\prod_{j=1}^{5}
\frac{\prod_{\langle d \rangle=\langle m\rangle,~m\le 0} (D_{j}+mz)}{\prod_{\langle d \rangle=\langle m\rangle,~m\le d}(D_{j}+mz)}\cdot\mathbf{1}_{g}\\
&=z\cdot \exp(Ht\slash z)\sum_{g\in \mathrm{C}(G')}
\sum_{d\in\overline{\mathrm{NE}}_{g}}q^{d}
\frac{1}{\displaystyle\prod_{\substack{\langle d \rangle=\langle m\rangle\\0< m\le d}}(H+mz)^{4}
\prod_{\substack{\langle d \rangle=\langle m\rangle\\0< m\le 4d}}(4H+mz)}\cdot\mathbf{1}_{g}
\end{align*}
where \(\mathrm{C}(G')\) denotes the set of conjugacy classes of \(G'\)
and \(\mathbf{1}_{g}\) is the unit in the cohomology ring of the component 
associated to \(g\) 
(cf.~\cite{2015-Coates-Corti-Iritani-Tseng-a-mirror-theorem-for-toric-stacks}).
In our case, \(G'\) is a finite abelian 
group and \(\mathrm{C}(G')= G'\).

Now we can apply the orbifold quantum Lefschetz 
hyperplane theorem to compute 
the orbifold Gromov--Witten invariants for \(Y\).
Recall that 
\(Y\cong Y'\slash G'\) is an anti-canonical hypersurface in \(\mathfrak{X}\).
Applying the \emph{hypergeometric modification} trick to 
\(I_{\mathfrak{X}}\) and restricting the result to the 
\emph{untwisted sector} \(\mathbf{1}_{e}\), we obtain
\begin{align}
\label{eq:i-function-modification-untwisted-4h}
I^{\mathrm{untw}}_{\mathfrak{X}}(t;z)
&=z\cdot \exp(Ht\slash z)
\sum_{d\in\mathbb{Z}_{\ge 0}}q^{d}
\frac{\displaystyle\prod_{1\le m\le 8d}(8H+mz)}{\displaystyle\prod_{1\le m\le d}(H+mz)^{4}
\prod_{1\le m\le 4d}(4H+mz)}\cdot \mathbf{1}_{e}
\end{align}
The series \eqref{eq:i-function-modification-untwisted-4h}
is almost identical to 
\begin{equation*}
\tilde{I}_{Y'}(t;z)
=z\cdot \exp(ht\slash z)
\sum_{d\in\mathbb{Z}_{\ge 0}}q^{d}
\frac{\displaystyle\prod_{1\le m\le 8d}(8h+mz)}{\displaystyle\prod_{1\le m\le d}(h+mz)^{4}
\prod_{1\le m\le 4d}(4h+mz)}
\end{equation*} 
the hypergeometric modification of the \(I\)-function
for the Calabi--Yau hypersurface \(Y'\) in \(\mathbf{P}(1,1,1,1,4)\). 
Here \(h\) is the hyperplane class of \(\mathrm{P}(1,1,1,1,4)\). The
only difference is the hyperplane classes \(h\) and \(H\).

\begin{corollary}
The mirror maps for \(8h\cdot\tilde{I}_{Y'}(t;z)\)
and \(8H\cdot\tilde{I}^{\mathrm{untw}}_{\mathfrak{X}}(t;z)\) are identical
if we treat \(H\) and \(h\) as formal variables such that \(h^{5}=H^{5}=0\).
\end{corollary}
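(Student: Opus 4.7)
The plan is to compare the two hypergeometric modifications term by term and observe that, after multiplying by $8h$ and $8H$ respectively, they become identical formal power series in $q$ and $1/z$ whose coefficients lie in the truncated polynomial rings $\mathbb{C}[h]/(h^5)$ and $\mathbb{C}[H]/(H^5)$. Since the mirror map is read off purely from the low-order terms of the $1/z$-expansion, this formal identification forces the two mirror maps to coincide.

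Concretely, I would first recall the extraction of the mirror map. Writing
\[
8h\cdot\tilde{I}_{Y'}(t;z) = 8h\cdot z + \bigl(8h\cdot t + J_{Y'}(q)\bigr) + O(z^{-1}),
\]
with $J_{Y'}(q)\in q\cdot\mathbb{C}[[q]]\otimes \mathbb{C}[h]/(h^5)$, the mirror map is determined by the coefficient of $h$ in $J_{Y'}(q)$ up to normalization. An analogous statement holds for $8H\cdot\tilde{I}^{\mathrm{untw}}_{\mathfrak{X}}$, with $h$ replaced by $H$ and $J_{Y'}$ by $J_{\mathfrak{X}}^{\mathrm{untw}}$. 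Thus what must be shown is that these $J$-series agree under the formal identification $h\leftrightarrow H$.

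Next I would inspect the hypergeometric kernels. Both series have the common shape
\[
z\cdot\exp(\xi t/z)\sum_{d\ge 0} q^d\cdot\frac{\prod_{1\le m\le 8d}(8\xi+mz)}{\prod_{1\le m\le d}(\xi+mz)^4 \prod_{1\le m\le 4d}(4\xi+mz)},
\]
with $\xi=h$ or $\xi=H$. Consequently the coefficient of every monomial $q^d z^{-k}$ is the same polynomial expression in $\xi$ on both sides, so once we impose $\xi^5 = 0$, the two series produce identical $J$-terms, and the extracted mirror maps coincide.

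The main obstacle is matching the Novikov variables $q$ between the two theories. As recorded in Remark \ref{rmk:toric-data}, the Mori generator $\ell$ of $\mathfrak{X}$ satisfies $H\cdot\ell=1/4$ because $\ell$ comes from a wall in the fan $\Xi$, and the Mori generator of $\mathbf{P}(1,1,1,1,4)$ has an analogous fractional intersection with $h$. Restricting to the untwisted sector $\mathbf{1}_e$ amounts to restricting the orbifold sum to curve classes whose age data are integral, and one must verify that, after this restriction, the powers of $q$ appearing in $\tilde{I}^{\mathrm{untw}}_{\mathfrak{X}}$ line up exactly with those in $\tilde{I}_{Y'}$. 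Once this bookkeeping is in place, the formal identification $h\leftrightarrow H$ together with the shared truncation $\xi^5 = 0$ completes the proof.
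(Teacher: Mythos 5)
Your proposal is correct and follows essentially the same route as the paper, which treats the corollary as immediate from the fact that the two displayed series share the identical hypergeometric kernel with $h$ replaced by $H$. The Novikov-variable bookkeeping you flag as the "main obstacle" is already settled before the corollary is stated: the restriction to the untwisted sector has been carried out explicitly, yielding a sum over integral $d\in\mathbb{Z}_{\ge 0}$ with the same kernel as $\tilde{I}_{Y'}$, so nothing further needs to be verified there.
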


Now we investigate the Poincar\'{e} pairing on \(Y'\)
and the orbifold Poincar\'{e} pairing
\(Y\). Let \(h\) and \(H\) be the hyperplane 
classes on \(\mathbf{P}(1,1,1,1,4)\)
and \(|\mathfrak{X}|\) as before.
By abuse of notation,  
the restriction of \(h\) and \(H\)
to \(Y'\) and \(Y\) are also denoted by \(h\) and \(H\).
From \eqref{eq:comm-diag-quotient}, we see that \(H=p^{\ast}h\) and
\(q^{\ast}H=2h\) (note that \(\Phi^{\ast}h=2h\)). Therefore,
\begin{equation*}
\int_{Y} H^{3}=\frac{1}{8}\int_{Y'} (2h)^{3}=
\int_{Y'}h^{3}=\int_{\mathbf{P}(1,1,1,1,4)} 8h^{4}=2.
\end{equation*}
We see that \(\{\mathbf{1},H,H^{2}/2,H^{3}/2\}\) is a symplectic basis
of \(\mathrm{H}^{even}(Y;\mathbb{C})\) with
respect to the orbifold Poincar\'{e} pairing. 
On the other hand, we know that 
\(\{\mathbf{1},h,h^{2}/2,h^{3}/2\}\) is a symplectic basis 
of \(\mathrm{H}^{even}(Y';\mathbb{C})\) with respect to the 
Poincar\'{e} pairing on \(Y'\).
This shows that
\begin{equation}
\mathrm{H}^{even}(Y';\mathbb{C}) \to
\mathrm{H}^{even}(Y;\mathbb{C}),~h^{k}\mapsto H^{k}
\end{equation}
is an isomorphism between normed linear spaces
(with respect to Poincar\'{e} paring
and orbifold Poincar\'{e} pairing).
Since the \(J\)-function of \(Y'\)
is identical to the restriction 
of the untwisted part of the \(J\)-function
of \(Y\) to the 
very small parameter space, 
we conclude the proof of Theorem \ref{thm:main-theorem-4h}.

\begin{remark}
We can also consider the double cover
\(Y\to X:=\mathbf{P}^{3}\) branched along eight
hyperplanes in general position. In this case, we have \(r=4\)
and the nef-partition is \(-K_{X}=H+H+H+H\).
Denote by \(Y^{\vee}\) the singular mirror.
As before, we can compute \(h^{2,1}(Y^{\vee})=1\).
We can carry out the Picard--Fuchs equation,
the unique holomorphic (affine) period as well
as the mirror map. The Picard--Fuchs equation turns 
out to be
\begin{equation*}
\theta_{z}^{4} - z \left(\theta_{z}+\frac{1}{2}\right)^{4}=0.
\end{equation*}
We obtain the predicted \(A\) model correlation function
\begin{align*}
\begin{split}
\label{eq:a-model-predicted-correlations}
&\langle H,H,H\rangle(q) \\
&= 2 + 64 q + 9792 q^2 + 1404928 q^3 + 205641280 
q^4 + 30593496064 q^5 + \cdots.
\end{split}
\end{align*}
The series was also obtained by E.~Sharpe in
\cite{2013-Sharpe-predictions-for-gromov-witten-
invariants-of-noncommutative-resolutions}
using the technique of gauged linear sigma models (GLSMs).
We can prove that it is the generating series of the 
untwisted genus zero orbifold Gromov--Witten invariants of \(Y\) 
with all insertions \(H\), where \(H\) is the pullback of 
the hyperplane class
of \(X=\mathbf{P}^{3}\). 
\end{remark}

\subsubsection{Geometric transitions}
A \emph{geometric transition} 
is a complex degeneration \(W\rightsquigarrow W_{0}\)
followed by a resolution \(W_{0}\leftarrow Z\).
In \cite{1999-Morrison-through-the-looking-glass},
Morrison conjectured that geometric transitions
are reversed under mirror symmetry.
To be precise, let \(W\) and \(Z\) be Calabi--Yau manifolds
and \(W^{\vee}\) and \(Z^{\vee}\) be their mirror. Suppose
that \(W\rightsquigarrow W_{0} \leftarrow Z\)
is a geometric transition. Then 
the conjecture asserts that there exists
a geometric transition \(Z^{\vee}\rightsquigarrow W_{0}^{\vee}
\leftarrow W^{\vee}\) connecting \(W^{\vee}\) and \(Z^{\vee}\).
In this paragraph,
we shall see that our singular mirror construction
fits into the picture nicely.

Our singular double cover \(Y\) admits 
a smoothing to \(W\) by deforming the
branch locus into a smooth 
degree \(8\) hypersurface in \(\mathbf{P}^{3}\)
and \(W\) can be realized as a smooth Calabi--Yau
hypersurface in \(\mathbf{P}(1,1,1,1,4)\). 
If we put
\begin{align*}
\Delta_{1} = \mathrm{Conv}\{(6,-2,-2,-1),&(-2,6,-2,-1),(-2,-2,6,-1),\\
&(0,0,0,1), (-2,-2,-2,-1)\},
\end{align*}
then \(\mathbf{P}(1,1,1,1,4)=\mathbf{P}_{\Delta_{1}}\).

The matrix 
\begin{equation*}
\begin{bmatrix}
1 & 1 & 0 & 0\\
-1& 0 & 0 & 1\\
0 & 0 & 1 & 1\\
0 & 0 & 0 &-1
\end{bmatrix}\in\mathrm{GL}_{4}(\mathbb{Z})
\end{equation*}
takes \(\{\rho_{1},\rho_{2},\rho_{3},\rho_{4},\rho_{5}\}\) into
\begin{align*}
\nu_{1}&=(2,0,0,-1),\\
\nu_{2}&=(0,2,0,-1),\\
\nu_{3}&=(0,0,2,-1),\\
\nu_{4}&=(0,0,0,1),\\
\nu_{5}&=(-2,-2,-2,-1).
\end{align*}
Let \(\nabla_{2}:=\mathrm{Conv}\{\nu_{1},\ldots,\nu_{5}\}\).
It is easy to check that \(\nabla_{2}\) is reflexive
and 
\begin{align*}
\Delta_{2}:=\nabla_{2}^{\vee}=\mathrm{Conv}
\{(3,-1,-1,-1),&(-1,3,-1,-1),(-1,-1,3,-1),\\
&(0,0,0,1),(-1,-1,-1,-1)\}.
\end{align*}
In other words, \(Y\) is a Calabi--Yau hypersurface
in \(\mathbf{P}_{\Delta_{2}}\). Denote by
\(\widehat{\mathbf{P}}_{\Delta_{2}}\to \mathbf{P}_{\Delta_{2}}\)
an MPCP desingularization and \(Z\to Y\) be the 
induced partial resolution. We obtain the following diagram.
\begin{equation}
\label{diag:transition}
\begin{tikzcd}
&   &Z\subset \widehat{\mathbf{P}}_{\Delta_{2}}\ar[d]\\
& \mathbf{P}_{\Delta_{1}}\supset W\ar[r,rightsquigarrow] & 
Y\subset \mathbf{P}_{\Delta_{2}}.
\end{tikzcd}
\end{equation}
One can obtain a MPCP desingularization 
\(\widehat{\mathbf{P}}_{\Delta_{1}}\to \mathbf{P}_{\Delta_{1}}\)
by taking weighted blow-up at the singular point.
\(W\) can be also regarded as a Calabi--Yau hypersurface in
\(\widehat{\mathbf{P}}_{\Delta_{1}}\).

On the other hand, we can construct the mirror of \(W\) and \(Z\)
by taking their dual polytope. Note that
\begin{align*}
\nabla_{1}:=\Delta_{1}^{\vee}=
\mathrm{Conv}\{(1,0,&0,-1),(0,1,0,-1),(0,0,1,-1),\\
&(0,0,0,1),(-1,-1,-1,-1)\}.
\end{align*}
By our construction, \(Y^{\vee}\)
is a double cover over 
an MPCP desingularization \(X^{\vee}\to \mathbf{P}_{\nabla}\). 
It turns out that \(Y^{\vee}\) can be realized 
as a Calabi--Yau hypersurface in some toric variety 
which we now describe. 

\(Y^{\vee}\to X^{\vee}\) is a double cover
branched along the union of toric divisors and a
general section \(s\in \mathrm{H}^{0}(X^{\vee},-K_{X^{\vee}})\).
We have an embedding \(\Gamma_{s}\colon X^{\vee}
\hookrightarrow\mathbf{P}_{X^{\vee}}
(\mathbb{L}\oplus\mathbb{C})\) via 
the graph of \(s\).
Here \(\mathbb{L}\)
is the total space of the anticanonical bundle
of \(X^{\vee}\). 
The fan of the toric variety 
\(\mathbf{P}_{X^{\vee}}
(\mathbb{L}\oplus\mathbb{C})\)
is easy to describe. Let
\(\tau\) be a maximal cone in the fan defining \(X^{\vee}\)
generated by \(\eta_{1},\eta_{2},\eta_{3}\in \Delta\cap M\).
We put
\begin{align*}
\tau_{0} &= 
\mathrm{Cone}\{(\eta_{1},-1),(\eta_{2},-1),(\eta_{3},-1),(0,0,0,1)\},\\
\tau_{\infty} &= 
\mathrm{Cone}\{(\eta_{1},-1),(\eta_{2},-1),(\eta_{3},-1),(0,0,0,-1)\}.
\end{align*}
The collection of \(\tau_{0}\) and \(\tau_{\infty}\)
together with all their faces with \(\tau\)
running through all maximal cones in the fan for \(X^{\vee}\)
is the fan defining \(\mathbf{P}_{X^{\vee}}
(\mathbb{L}\oplus\mathbb{C})\).
We denote it by \(\Theta\).
Then \(\Theta\) is a fan in \(\overline{M}_{\mathbb{R}}\), where
\(\overline{M}:=M\times\mathbb{Z}\).
Consider three sublattices
\begin{equation*}
\overline{M}_{1}:=2M\times 2\mathbb{Z} \subset 
\overline{M}_{2}:=M\times 2\mathbb{Z}\subset 
\overline{M}:=M\times\mathbb{Z}.
\end{equation*}

\begin{proposition}
\(Y^{\vee}\) can be realized as a Calabi--Yau
hypersurface in \(X_{\Theta,\overline{M}_{2}}\).
\end{proposition}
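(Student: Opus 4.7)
The plan is to exhibit $Y^{\vee}$ explicitly as the scheme-theoretic preimage of the graph $\Gamma_s(X^{\vee})\subset X_{\Theta,\overline{M}}=\mathbf{P}_{X^{\vee}}(\mathbb{L}\oplus\mathbb{C})$ under the natural toric double cover
\[
\pi\colon X_{\Theta,\overline{M}_2}\longrightarrow X_{\Theta,\overline{M}}
\]
induced by the index-two lattice inclusion $\overline{M}_2\subset\overline{M}$, and then to verify that this preimage is anticanonical by a divisor-class calculation.

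First I would pin down the branch locus of $\pi$. A direct inspection shows that every primitive ray generator of $\Theta$ doubles on passage to the sublattice $\overline{M}_2$: $(\eta_i,-1)\mapsto(2\eta_i,-2)$ and $(0,\pm 1)\mapsto(0,\pm 2)$. Thus $\pi$ has ramification index two along every torus-invariant prime divisor of $X_{\Theta,\overline{M}}$, so its branch locus is the full toric boundary $\bigcup_i\widetilde{D}_i\cup Z_0\cup Z_\infty$, where $\widetilde{D}_i:=p^{\ast}D_i$ with $p\colon X_{\Theta,\overline{M}}\to X^{\vee}$ the bundle projection and $Z_0,Z_\infty$ the two toric sections of $p$. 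Because $\Gamma_s$ is a section of $p$ sending $x\mapsto[s(x)\mathpunct{:}1]$ in the fiber, it meets each $\widetilde{D}_i$ along $D_i\subset X^{\vee}\cong\Gamma_s$, meets the zero section $Z_0$ along $\{s=0\}$, and is disjoint from $Z_\infty$. Consequently $\pi^{-1}(\Gamma_s)\to\Gamma_s\cong X^{\vee}$ is a connected double cover ramified precisely along $\bigcup_i D_i\cup\{s=0\}$, which is by construction the branch divisor of $Y^{\vee}\to X^{\vee}$, identifying the underlying variety with $Y^{\vee}$.

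It then remains to see that $\pi^{-1}(\Gamma_s)$ lies in the anticanonical class of $X_{\Theta,\overline{M}_2}$. A toric computation gives $[\Gamma_s]=[Z_0]$ in $\mathrm{Pic}(X_{\Theta,\overline{M}})$, and the monomial $t^{(0,1)}$ witnesses the linear equivalence $-K_{X_{\Theta,\overline{M}}}\sim 2Z_0$, so $[\Gamma_s]$ is the ``half-anticanonical'' class. Under the ramified cover $\pi$, one has $\pi^{\ast}Z_0=2\widetilde{Z}_0$, where $\widetilde{Z}_0\subset X_{\Theta,\overline{M}_2}$ is the corresponding toric divisor; the analogous toric computation in $\overline{M}_2$ (using $t^{(0,1/2)}$) gives $-K_{X_{\Theta,\overline{M}_2}}\sim 2\widetilde{Z}_0$. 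Combining these yields $[\pi^{-1}(\Gamma_s)]=\pi^{\ast}[\Gamma_s]=2[\widetilde{Z}_0]=[-K_{X_{\Theta,\overline{M}_2}}]$, so $Y^{\vee}$ is anticanonical.

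The main obstacle, I expect, is making the identification $\pi^{-1}(\Gamma_s)\cong Y^{\vee}$ rigorous at the boundary strata: one needs to know that the scheme-theoretic preimage is reduced and irreducible and that the induced double-cover datum matches the one defining $Y^{\vee}$ rather than an \'{e}tale twist. Under the standing hypothesis that $\Delta$ and $\nabla$ admit uni-modular triangulations, $X^{\vee}$ is smooth (so $\mathrm{Pic}(X^{\vee})$ is torsion-free and the 2-torsion ambiguity disappears) and the branch divisor $\bigcup_i D_i\cup\{s=0\}$ is SNC, which should be enough to carry out the verification chart-by-chart in the standard affine toric charts; the auxiliary lattice $\overline{M}_1=2M\times 2\mathbb{Z}$ and the tower $X_{\Theta,\overline{M}_1}\to X_{\Theta,\overline{M}_2}\to X_{\Theta,\overline{M}}$ of total degree $2^{n+1}$ furnish a convenient consistency check on the covering data.
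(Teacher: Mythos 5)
Your proposal is correct and follows essentially the same route as the paper: both realize \(Y^{\vee}\) as the preimage of the graph \(\Gamma_{s}(X^{\vee})\) under the two-to-one toric map \(X_{\Theta,\overline{M}_{2}}\to X_{\Theta,\overline{M}}\), which is branched along the full toric boundary (the paper presents this as the fibred product \(S\) with the top cover \(X_{\Theta,\overline{M}_{1}}\to X_{\Theta,\overline{M}}\) followed by the quotient by \(G=\overline{M}_{2}/\overline{M}_{1}\), which is literally the same subvariety \(p^{-1}(\Gamma_{s})\)). Your explicit verification that \([\pi^{-1}(\Gamma_{s})]=-K_{X_{\Theta,\overline{M}_{2}}}\) via \([\Gamma_{s}]=[Z_{0}]\) and \(-K\sim 2Z_{0}\), and your use of torsion-freeness of \(\mathrm{Pic}(X^{\vee})\) to pin down the double-cover datum, supply details the paper leaves implicit (the latter mirroring its earlier argument that \(Y\simeq Y'/G'\)).
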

\begin{proof}
Note that 
\begin{equation*}
X_{\Theta,\overline{M}} \cong X_{\Theta,\overline{M}_{1}} \cong
\mathbf{P}_{X^{\vee}}(\mathbb{L}\oplus\mathbb{C})
\end{equation*}
and the inclusion \(\overline{M}_{1}\subset\overline{M}\)
induces a finite cover \(X_{\Theta,\overline{M}_{1}}
\to X_{\Theta,\overline{M}}\).
We obtain the following diagram:
\begin{equation*}
\begin{tikzcd}
X_{\Theta,\overline{M}_{1}}\ar[rd,"q"]\ar[dd,"\Phi"] &\\
& X_{\Theta,\overline{M}_{2}}\ar[ld,"p"]\\
X_{\Theta,\overline{M}} &
\end{tikzcd}
\end{equation*}
Put \(G:=\overline{M}_{2}\slash\overline{M}_{1}\).
We have 
\begin{equation*}
X_{\Theta,\overline{M}_{1}}\slash G\cong X_{\Theta,\overline{M}_{2}}.
\end{equation*}
Moreover, \(p\) is a double cover branched
along the union of toric divisors in \(X_{\Theta,\overline{M}}\).
Let \(S\) be the fibred product
\begin{equation*}
\begin{tikzcd}
& S\ar[r]\ar[d] & X_{\Theta,\overline{M}_{1}}\ar[d,"\Phi"]\\
& X^{\vee} \ar[r,"\Gamma_{s}"] &X_{\Theta,\overline{M}_{1}}
\end{tikzcd}
\end{equation*}
It then follows that \(Y^{\vee}\cong S\slash G
\subset X_{\Theta,\overline{M}_{2}}\).
\end{proof}
One can check that 
\(\Theta\) (with respect to the integral structure \(\overline{M}_{2}\))
is a refinement of the normal fan of \(\nabla_{1}\).
In other words, there exists a MPCP desingularization
\(\widehat{\mathbf{P}}_{\nabla_{1}}\) which
dominants \(X_{\Theta,\overline{M}_{2}}\).

On the other hand, one can again deform 
the branch locus of \(Y^{\vee}\to X^{\vee}\)
to obtain a smooth Calabi--Yau
hypersurface \(Z^{\vee}\) in the toric variety
\(X_{\Theta,\overline{M}}\)
which can be viewed as an MPCP 
desingularization of \(\mathbf{P}_{\nabla_{2}}\).
We thus obtain the following diagram
mirror to \eqref{diag:transition}
\begin{equation*}
\begin{tikzcd}
& &Z^{\vee}\subset \widehat{\mathbf{P}}_{\nabla_{2}}\ar[d,rightsquigarrow]\\
& \widehat{\mathbf{P}}_{\nabla_{1}}\supset W^{\vee}\ar[r] 
& Y^{\vee}\subset X_{\Theta,\overline{M}_{2}}.
\end{tikzcd}
\end{equation*}

\begin{bibdiv}
\begin{biblist}

\bib{2012-Arapura-hodge-theory-of-cyclic-covers-branched-over-a-union-of-hyperplanes}{article}{
      author={Arapura, Donu},
       title={{H}odge theory of cyclic covers branched over a union of
  hyperplanes},
        date={2012January},
     journal={Canadian Journal of Mathematics},
      volume={66},
      number={3},
       pages={505\ndash 524},
}

\bib{1957-Baily-on-the-imbedding-of-v-manifolds-in-projective-space}{article}{
      author={Baily, Walter~L.},
       title={On the imbedding of {\(v\)}-manifolds in projective space},
        date={1957},
     journal={American Journal of Mathematics},
      volume={79},
       pages={403\ndash 430},
}

\bib{1994-Batyrev-dual-polyhedra-and-mirror-symmetry-for-calabi-yau-hypersurfaces-in-toric-varieties}{article}{
      author={Batyrev, Victor~V.},
       title={{Dual polyhedra and mirror symmetry for Calabi--Yau hypersurfaces
  in toric varieties}},
        date={1994},
     journal={Journal of Algebraic Geometry},
      volume={3},
       pages={493\ndash 545},
}

\bib{1996-Batyrev-Borisov-on-calabi-yau-complete-intersections-in-toric-varieties}{incollection}{
      author={Batyrev, Victor~V.},
      author={Borisov, Lev~A.},
       title={On {C}alabi--{Y}au complete intersections in toric varieties},
        date={1996},
   booktitle={Higher-dimensional complex varieties ({T}rento, 1994)},
   publisher={de Gruyter, Berlin},
       pages={39\ndash 65},
      review={\MR{1463173}},
}

\bib{2015-Coates-Corti-Iritani-Tseng-a-mirror-theorem-for-toric-stacks}{article}{
      author={Coates, Tom},
      author={Corti, Alessio},
      author={Iritani, Hiroshi},
      author={Tseng, Hsian-Hua},
       title={A mirror theorem for toric stacks},
        date={2015},
        ISSN={0010-437X},
     journal={Compositio Mathematica},
      volume={151},
      number={10},
       pages={1878\ndash 1912},
      review={\MR{3414388}},
}

\bib{1986-Danilov-Khovanskii-newton-polyhedra-and-an-algorithm-for-calculating-hodge-deligne-numbers}{article}{
      author={Danilov, Vladimir~I.},
      author={Khovanski\u{\i}, Askold~G.},
       title={Newton polyhedra and an algorithm for calculating
  {H}odge--{D}eligne numbers},
        date={1986},
     journal={Izvestiya Akademii Nauk SSSR. Seriya Matematicheskaya},
      volume={50},
      number={5},
       pages={925\ndash 945},
}

\bib{1988-Dolgachev-Ortland-point-sets-in-projective-spaces-and-theta-functions}{article}{
      author={Dolgachev, Igor},
      author={Ortland, David},
       title={Point sets in projective spaces and theta functions},
        date={1988},
        ISSN={0303-1179},
     journal={Ast\'{e}risque},
      number={165},
       pages={210 pp. (1989)},
         url={https://mathscinet.ams.org/mathscinet-getitem?mr=1007155},
      review={\MR{1007155}},
}

\bib{1995-Hosono-Klemm-Theisen-Yau-mirror-symmetry-mirror-map-and-applications-to-calabi-yau-hypersurfaces}{article}{
      author={Hosono, Shinobu},
      author={Klemm, Albrecht},
      author={Theisen, Stefan},
      author={Yau, Shing-Tung},
       title={Mirror symmetry, mirror map and applications to {C}alabi--{Y}au
  hypersurfaces},
        date={1995},
        ISSN={0010-3616},
     journal={Communications in Mathematical Physics},
      volume={167},
      number={2},
       pages={301\ndash 350},
      review={\MR{1316509}},
}

\bib{2020-Hosono-Lee-Lian-Yau-mirror-symmetry-for-double-cover-calabi-yau-varieties}{article}{
      author={Hosono, Shinobu},
      author={Lee, Tsung-Ju},
      author={Lian, Bong~H.},
      author={Yau, Shing-Tung},
       title={Mirror symmetry for double cover {C}alabi--{Y}au varieties},
        date={2020-03},
      eprint={2003.07148},
}

\bib{2020-Hosono-Lian-Takagi-Yau-k3-surfaces-from-configurations-of-six-lines-in-p2-and-mirror-symmetry-i}{article}{
      author={Hosono, Shinobu},
      author={Lian, Bong~H.},
      author={Takagi, Hiromichi},
      author={Yau, Shing-Tung},
       title={K3 surfaces from configurations of six lines in {$\mathbb{P}^2$}
  and mirror symmetry {I}},
        date={2020},
        ISSN={1931-4523},
     journal={Communications in Number Theory and Physics},
      volume={14},
      number={4},
       pages={739\ndash 783},
      review={\MR{4164174}},
}

\bib{2019-Hosono-Lian-Yau-k3-surfaces-from-configurations-of-six-lines-in-p2-and-mirror-symmetry-ii-lambda-k3-functions}{article}{
      author={Hosono, Shinobu},
      author={Lian, Bong~H.},
      author={Yau, Shing-Tung},
       title={{K3 surfaces from configurations of six lines in
  \(\mathbb{P}^{2}\) and mirror symmetry II -- \(\lambda_{K3}\)-functions}},
        date={2019},
        ISSN={1073-7928},
     journal={International Mathematics Research Notices},
  eprint={https://academic.oup.com/imrn/advance-article-pdf/doi/10.1093/imrn/rnz259/30788308/rnz259.pdf},
         url={https://doi.org/10.1093/imrn/rnz259},
        note={rnz259},
}

\bib{1996-Hosono-Lian-Yau-gkz-generalized-hypergeometric-systems-in-mirror-symmetry-of-calabi-yau-hypersurfaces}{article}{
      author={Hosono, Shinobu},
      author={Lian, Bong~H.},
      author={Yau, Shing-Tung},
       title={G{KZ}-generalized hypergeometric systems in mirror symmetry of
  {C}alabi--{Y}au hypersurfaces},
        date={1996},
        ISSN={0010-3616},
     journal={Communications in Mathematical Physics},
      volume={182},
      number={3},
       pages={535\ndash 577},
      review={\MR{1461942}},
}

\bib{1997-Hosono-Lian-Yau-maximal-degeneracy-points-of-gkz-systems}{article}{
      author={Hosono, Shinobu},
      author={Lian, Bong~H.},
      author={Yau, Shing-Tung},
       title={{Maximal degeneracy points of GKZ systems}},
        date={1997April},
     journal={Journal of the American Mathematical Society},
      volume={10},
      number={2},
       pages={427\ndash 443},
}

\bib{2022-Lee-Lian-Yau-on-calabi-yau-fractional-complete-intersections}{article}{
      author={Lee, Tsung-Ju},
      author={Lian, Bong~H.},
      author={Yau, Shing-Tung},
       title={On {C}alabi--{Y}au fractional complete intersections},
        date={2022},
        ISSN={1558-8599},
     journal={Pure Appl. Math. Q.},
      volume={18},
      number={1},
       pages={317\ndash 342},
  url={https://mathscinet-ams-org.ezp-prod1.hul.harvard.edu/mathscinet-getitem?mr=4381855},
      review={\MR{4381855}},
}

\bib{2021-Lian-Zhu-on-the-hyperplane-conjecture-for-periods-of-calabi-yau-hypersurfaces-in-pn}{article}{
      author={Lian, Bong~H.},
      author={Zhu, Minxian},
       title={On the hyperplane conjecture for periods of {C}alabi-{Y}au
  hypersurfaces in {${\bf P}^n$}},
        date={2021},
        ISSN={0022-040X},
     journal={J. Differential Geom.},
      volume={118},
      number={1},
       pages={101\ndash 146},
         url={https://mathscinet.ams.org/mathscinet-getitem?mr=4255072},
      review={\MR{4255072}},
}

\bib{1993-Matsumoto-theta-functions-on-bounded-symmetric-domain-of-type-i-22-and-the-period-map-of-a-4-parameter-family-of-k3-surfaces}{article}{
      author={Matsumoto, Keiji},
       title={Theta functions on the bounded symmetric domain of type
  {$I_{2,2}$} and the period map of a {$4$}-parameter family of {$K3$}
  surfaces},
        date={1993},
        ISSN={0025-5831},
     journal={Math. Ann.},
      volume={295},
      number={3},
       pages={383\ndash 409},
         url={https://mathscinet.ams.org/mathscinet-getitem?mr=1204828},
      review={\MR{1204828}},
}

\bib{1999-Morrison-through-the-looking-glass}{incollection}{
      author={Morrison, David~R.},
       title={Through the looking glass},
        date={1999},
   booktitle={Mirror symmetry, {III} ({M}ontreal, {PQ}, 1995)},
      series={AMS/IP Stud. Adv. Math.},
      volume={10},
   publisher={Amer. Math. Soc., Providence, RI},
       pages={263\ndash 277},
      review={\MR{1673108}},
}

\bib{1988-Matsumoto-Sasaki-Yoshida-the-period-map-of-a-4-parameter-family-of-k3-surfaces-and-the-aomoto-gelfand-hypergeometric-function-of-type-3-6}{article}{
      author={Matsumoto, Keiji},
      author={Sasaki, Takeshi},
      author={Yoshida, Masaaki},
       title={The period map of a {$4$}-parameter family of {$K3$} surfaces and
  the {A}omoto--{G}el\cprime fand hypergeometric function of type {$(3,6)$}},
        date={1988},
        ISSN={0386-2194},
     journal={Proc. Japan Acad. Ser. A Math. Sci.},
      volume={64},
      number={8},
       pages={307\ndash 310},
         url={https://mathscinet.ams.org/mathscinet-getitem?mr=973860},
      review={\MR{973860}},
}

\bib{1992-Matsumoto-Sasaki-Yoshida-the-monodromy-of-the-period-map-of-a-4-parameter-family-of-k3-surfaces-and-the-hypergeometric-function-of-type-3-6}{article}{
      author={Matsumoto, Keiji},
      author={Sasaki, Takeshi},
      author={Yoshida, Masaaki},
       title={The monodromy of the period map of a {$4$}-parameter family of
  {$K3$} surfaces and the hypergeometric function of type {$(3,6)$}},
        date={1992},
        ISSN={0129-167X},
     journal={Internat. J. Math.},
      volume={3},
      number={1},
       pages={164},
         url={https://mathscinet.ams.org/mathscinet-getitem?mr=1136204},
      review={\MR{1136204}},
}

\bib{2006-Reuvers-moduli-spaces-of-configurations}{thesis}{
      author={Reuvers, Erik},
       title={Moduli spaces of configurations},
        type={Ph.D. Thesis},
        date={2006},
}

\bib{2013-Sharpe-predictions-for-gromov-witten-invariants-of-noncommutative-resolutions}{article}{
      author={Sharpe, Eric},
       title={Predictions for {G}romov--{W}itten invariants of noncommutative
  resolutions},
        date={2013},
        ISSN={0393-0440},
     journal={Journal of Geometry and Physics},
      volume={74},
       pages={256\ndash 265},
      review={\MR{3118585}},
}

\bib{1977-Steenbrink-mixed-hodge-structure-on-the-vanishing-cohomology}{book}{
      author={Steenbrink, Joseph H.~M.},
       title={Mixed {H}odge structure on the vanishing cohomology},
   publisher={Sijthoff and Noordhoff, Alphen aan den Rijn},
        date={1977},
}

\bib{2013-Sheng-Xu-Zuo-maximal-families-of-calabi-yau-manifolds-with-minimal-length-length-yukawa-coupling}{article}{
      author={Sheng, Mao},
      author={Xu, Jinxing},
      author={Zuo, Kang},
       title={Maximal families of {C}alabi--{Y}au manifolds with minimal length
  {Y}ukawa coupling},
        date={2013},
        ISSN={2194-6701},
     journal={Communications in Mathematics and Statistics},
      volume={1},
      number={1},
       pages={73\ndash 92},
      review={\MR{3197873}},
}

\bib{2010-Tseng-orbifold-quantum-riemann-roch-lefschetz-and-serre}{article}{
      author={Tseng, Hsian-Hua},
       title={Orbifold quantum {R}iemann--{R}och, {L}efschetz and {S}erre},
        date={2010},
        ISSN={1465-3060},
     journal={Geometry \& Topology},
      volume={14},
      number={1},
       pages={1\ndash 81},
      review={\MR{2578300}},
}

\end{biblist}
\end{bibdiv}
\end{document}